\newcommand{\fsu}{\mathcal{F}_{s,\boldsymbol{u}}}
\newcommand{\lblthm}[1]{\label{thm:#1}}
\newcommand{\refthm}[1]{Theorem~\ref{thm:#1}}
\newcommand{\lblprp}[1]{\label{prop:#1}}
\newcommand{\refprp}[1]{Proposition~\ref{prop:#1}}
\newcommand{\lblcor}[1]{\label{cor:#1}}
\newcommand{\refcor}[1]{Corollary~\ref{cor:#1}}
\newcommand{\lbllem}[1]{\label{lem:#1}}
\newcommand{\reflem}[1]{Lemma~\ref{lem:#1}}
\newtheorem{thm}{Theorem}[section]
\newtheorem{dfn}[thm]{Definition}
\newtheorem{prop}[thm]{Proposition}
\newtheorem{lem}[thm]{Lemma}
\newtheorem{cor}[thm]{Corollary}
\newtheorem{rmk}[thm]{Remark}
\renewcommand{\lg}{\log_2}
\newcommand{\abs}[1]{\mathord{\left|#1\right|}}
\newcommand{\serr}[2]{\mathrm{Err}\mathord{\left(#2;#1\right)}}
\newcommand{\aerr}[2]{\mathord{\left|\serr{#1}{#2}\right|}}
\newcommand{\werr}[2]{\mathrm{e}^{\mathrm{wor}}\mathord{\left(#2;#1\right)}}
\newcommand{\wal}[1]{\mathrm{wal}_{#1}}
\newcommand{\intervalsep}{{..}}
\renewcommand{\intervalsep}{,}
\newcommand{\hoi}[2]{\mathord{\left[{#1}\intervalsep{#2}\right)}}
\newcommand{\cli}[2]{\mathord{\left[{#1}\intervalsep{#2}\right]}}
\newcommand{\I}{\hoi01}
\newcommand{\Ic}{\cli01}
\newcommand{\R}{\mathbb R}
\newcommand{\Z}{\mathbb Z}
\newcommand{\B}{\mathbb B}
\renewcommand{\B}{\mathbb F_2}
\newcommand{\N}{\mathbb{N}_0}
\newcommand{\lblsec}[1]{\label{sec:#1}}
\newcommand{\refsec}[1]{Section~\ref{sec:#1}}
\newcommand{\lblrmk}[1]{\label{rmk:#1}}
\newcommand{\refrmk}[1]{Remark~\ref{rmk:#1}}
\newcommand{\lbldfn}[1]{\label{dfn:#1}}
\newcommand{\refdfn}[1]{Definition~\ref{dfn:#1}}
\newcommand{\lbleq}[1]{\label{eq:#1}}
\newcommand{\refeq}[1]{(\ref{eq:#1})}
\newcommand{\lblfig}[1]{\label{fig:#1}}
\newcommand{\reffig}[1]{Figure~\ref{fig:#1}}
\newcommand{\remove}[1]{}
\begin{document}
\begin{frontmatter}
\title{Approximation of Quasi-Monte Carlo worst case error in weighted spaces of infinitely times smooth functions}
\author[M]{Matsumoto Makoto}
\address[M]{Graduate School of Sciences, Hiroshima University, Hiroshima 739-8526 Japan}
\ead{m-mat@math.sci.hiroshima-u.ac.jp}
\author[O]{Ryuichi Ohori}
\address[O]{Fujitsu Laboratories Ltd., Kanagawa 211-8588 Japan}
\ead{ohori.ryuichi@jp.fujitsu.com}
\author[Y]{Takehito Yoshiki\corref{}}
\address[Y]{School of Mathematics and Statistics, The University of New South Wales, Sydney, NSW 2052, Australia}
\ead{takehito.yoshiki1@unsw.edu.au}
\cortext[]{corresponding author}
\begin{abstract}
In this paper, we consider Quasi-Monte Carlo (QMC) worst case error of weighted smooth function classes in $C^\infty\Ic^s$ by a digital net over $\B$.
We show that the ratio of the worst case error to the QMC integration error of an exponential function is bounded above and below by constants.
This result provides us with a simple interpretation that a digital net with small QMC integration error for an exponential function also gives the small integration error for any function in this function space.
\end{abstract}
\begin{keyword}
Quasi-Monte Carlo integration\sep digital net\sep worst case error\sep Walsh coefficients\sep infinitely differentiable functions
\MSC 65C05\sep 65D30\sep 65D32
\end{keyword}
\end{frontmatter}
\section{Introduction}
\lblsec{introduction}
Quasi-Monte Carlo (QMC) integration is one of methods for numerical integration over the $s$-dimensional unit cube $\I^s$ (see \cite{Dick_Pill}, \cite{Nied} and \cite{Sloan} for details).
We approximate the integral of a function $f\colon \I^s\to\R$
\[
I(f) := \int_{\I^s}f(\boldsymbol{x})\,d\boldsymbol{x}
\]
by a quadrature rule of the form
\[
I_P (f) := \frac1N\sum_{\boldsymbol{x}\in P}f(\boldsymbol{x}),
\]
where $P$ is a point set in $\I^s$ with finite cardinality $N$.

In order to make the integration error ${\serr Pf}:=I_P(f) - I(f)$ small for a class of functions, we often measure the quality of point sets $P\subset \I^s$ using the so-called worst case error.
For a function space $F$ with norm $\|\cdot \|_F$, the worst case error for $F$ by $P$ is defined as the supremum of the absolute value of the integration error in the unit ball of $F$:
\begin{align}
{\werr PF}:=\sup_{\substack{f\in F\\ \| f\|_F\le 1}}{\aerr Pf}.
\end{align}
Apparently, this quantity performs as an upper bound on ${\serr Pf}$ for every $f\in F$:
\begin{align}
{\aerr Pf}\le {\werr PF}\cdot \| f\|_F.
\label{KH}
\end{align}
Thus, our goal is to find a point set $P$ with small value of ${\werr PF}$ for a given function class $F$.
In what follows, we focus on QMC rules using digital nets over $\B$ as point sets (see \refdfn{digital-net}).
This is a special type of construction scheme we often use for practical application of QMC\@.

In the previous works on QMC, the function class consisting of functions $f$ whose derivatives up to order $1$ are continuous has been extensively considered.
For this function class, the star-discrepancy plays an alternative role to the worst case error in (\ref{KH}).
Various types of low discrepancy digital nets $P$ have been developed, whose discrepancy is of order $N^{-1+\epsilon}$ for arbitrary small $\epsilon$ (see \cite{Dick_Pill,Kuiper} for the details).
Recently, smoother function spaces also become targets of research in studies of QMC rules, e.g., $\alpha$-smooth Sobolev space, consisting of functions $f$ whose each derivatives up to order $\alpha$ are continuous for $\alpha\ge 2$.
For this function space, efficient digital nets $P$ also have been established (see, e.g., \cite{Dick_periodic,Dick_nonperiodic}).
They satisfy the higher order convergence of the worst case error $N^{-\alpha+\epsilon}$.

More recently, for some family of functions in $C^\infty\Ic^s$, the existence or theoretical construction algorithm of digital nets $P$ have been developed with the convergence rate of the integration error $N^{-C_s\log N}$ for a constant $C_s$ depending on $s$ \cite{WFupp,WFuppex}.
On the other hand, heuristic algorithm is also used for constructing digital nets giving efficient QMC rules in \cite{MSM}, in which Walsh Figure of Merit (WAFOM) is of great importance.
WAFOM performs as an upper bound on the worst case error of a digital net $P$.
Since WAFOM is computable quickly on computer, we can obtain low-WAFOM point sets by computer search (see, e.g., \cite{MSM,Harase,H}).

As a continuous work, Suzuki~\cite{K:acc} considered more general function spaces
\[
\fsu
:=\left\{f\in C^{\infty}\Ic^d\ \middle|\
\|f\|_{\fsu}:=
\sup_{(\alpha_1,\dots,\alpha_s)\in(\mathbb{N}\cup\{0\})^s}
\frac{
\left\|f^{(\alpha_1,\dots,\alpha_s)}\right\|_{L^1}
}
{\prod_{j=1}^su^{\alpha_j}_j}
<\infty\right\},
\]
with a sequence of positive weights $\boldsymbol{u}=(u_j)_{j\ge 1}$.
(Actually he considered another function space including this smooth function space.)
Here $f^{(\alpha_1,\dots,\alpha_s)}$ is the $(\alpha_1,\dots,\alpha_s)$-th mixed partial derivative of $f$.
When we set $u_j=2$ for all $j$, this space corresponds with the original space considered in the above works.
Suzuki~\cite{K:acc} gave the existence of digital nets which achieve the convergence rate $N^{-C_{s,\boldsymbol{u}}\log N}$ of the worst case error for a constant $C_{s,\boldsymbol{u}}$ depending on $s$ and $\boldsymbol{u}$.
Furthermore, under certain conditions on the weights $\boldsymbol{u}$, this upper bound on the worst case error becomes $C'\cdot N^{-D'(\log N)^{E'}}$ for absolute constants $C',D',E'$, which is a dimension-independent convergence rate.
Computer search algorithm is also effective for finding good point sets in this space as in the above case.
For this function space, we can introduce
 a generalization of WAFOM as an upper bound on the worst case error (see \cite[Remark~6.4]{K:acc}).
For this generalized WAFOM, applying component-by-component (CBC) construction, we can construct digital nets achieving the above dimension-independent convergence rate~\cite{DGSY}.

In the series of the study on QMC for the function space $\fsu$ by a digital net $P$, we have mainly focused on upper bounds on the worst case error.
If we grasp both lower and upper bounds on the worst case error, we could study QMC rules more precisely.
If we expand the function class $\fsu$, such analysis can be obtained.
In fact, a function space $\mathcal{W}_{s,\boldsymbol{a},b}$ including $\fsu$ is established in \cite{K:acc}.
The worst case error for $\mathcal{W}_{s,\boldsymbol{a},b}$ by a digital net $P$ is easily obtained by the definition, which equals a generalized WAFOM of $P$\@.
However, it also contains many other discontinuous functions although we are often interested in smooth functions in $\fsu$.

In this paper, we give feasible upper and lower bounds on the worst case error for the function space ${\werr P\fsu}$ by using well-known exponential functions.
Generally speaking, the explicit expression for the worst case error is not so easy to obtain
 for any point set $P$.
The case where the function space $F$ is a reproducing kernel Hilbert Space (RKHS) is the most preferable situation, e.g., see \cite[Chapter~2]{Dick_Pill} for detailed information of RKHS\@.
In this case the worst case error by a point set $P$ is presented by
\[
\werr PF = \aerr P{f_{F,P}},
\]
where $f_{F,P}$ depends on both of $F$ and the choice of $P$.
This identity implies that there is some function $f_{F,P}$, whose integration error attains the worst case error by $P$.
Thus, in analyzing the worst case error, we have to replace $f_{F,P}$ every time we replace $P$.

In our case, the target function space $\fsu$ is not even a RKHS\@.
Nevertheless, we can obtain the following simple estimation for $\werr P\fsu$:
\[
L_{s,\boldsymbol{u}}\le \frac{\mathrm{Err}(\exp(-\sum_{j=1}^su_jx_j);P)}
{{\werr P\fsu}}\le U_{s,\boldsymbol{u}},
\]
for any digital net $P$ and some constants $L_{s,\boldsymbol{u}}$ and $U_{s,\boldsymbol{u}}$ depending on $s$ and $\boldsymbol{u}$ but not $P$ (see \refthm{e=Err} for the detailed description of the main theorem).
Although this is not an equality for the worst case error and we restrict the range of $P$ to the class of digital nets, this gives us with a simple interpretation for the integration error.
Moreover, the choice of integrands is independent of $P$ unlike the above case on a RKHS\@.

This result also has an important role when we try to find a digital net $P$ whose worst case error is small.
As we mentioned above, a computable upper bound $W_{\boldsymbol{u}}(P)$ on the worst case error for $\fsu$ is established, which is a generalization of WAFOM (see \refdfn{WAFOM} for $W_{\boldsymbol{u}}(P)$).
Using this quantity $W_{\boldsymbol{u}}(P)$, we could execute computer search for finding efficient QMC rules.
On the other hand, we may also use $\mathrm{Err}(\exp(-\sum_{j=1}^su_jx_j);P)$ instead of $W_{\boldsymbol{u}}(P)$, since this value $\mathrm{Err}(\exp(-\sum_{j=1}^su_jx_j);P)$ is also computable in reasonable time.
We compare these two quantities in \refsec{comparison-qualities}.
From the discussion there, when we calculate in a wide range of the weights $\boldsymbol{u}$ or higher dimensions $s$ under the condition that $u_j=u_1$ for $1\le j\le s$, we can expect that $\mathrm{Err}(\exp(-\sum_{j=1}^su_jx_j);P)$ is suitable for finding good QMC points.

In the proof of our main result, we also prove that $W_{\boldsymbol{u}}(P)$ approximates the worst case error as well as $\mathrm{Err}(\exp(-\sum_{j=1}^su_jx_j);P)$:
\[
L'_{s,\boldsymbol{u}}\le \frac{W_{\boldsymbol{u}}(P)}
{\werr P\fsu} \le U'_{s,\boldsymbol{u}},
\]
where $L'_{s,\boldsymbol{u}}$ and $U'_{s,\boldsymbol{u}}$ depends on not $P$ but $s$ and $\boldsymbol{u}$ (see \refcor{e=wf} for the explicit statement).
This result verifies that these two criteria have essentially the same role in the QMC error analysis.

The remainder of this article is organized as follows.
In \refsec{preliminaries} we recall some definitions for QMC integration by digital nets, and relation with Walsh coefficients.
In \refsec{proof-of-main-theorem} we prove the main result.
In \refsec{comparison-qualities}, we show the numerical properties on $\serr Pf$ and $W_{\boldsymbol{u}}(P)$ in terms of running times and the ratio of these two quantities.

\section{Preliminaries}
\lblsec{preliminaries}
We denote $\N=\mathbb{N}\cup\{0\}$ in this paper.
In this section we briefly recall the notion of digital nets and Walsh coefficients.
\subsection{QMC integration by digital nets}
\if0
Let $f$ be a real-valued function defined over the $s$-dimensional unit cube $\I^s$.
The QMC integration of $f$ by a finite point set $P \subset \I^s$ is defined as the average
\[ I_P(f) := \frac1{N}\sum_{\boldsymbol{x} \in P}f(\boldsymbol{x}) \]
which approximates the integral
\[ I(f) := \int_{\I^s}f(\boldsymbol{x})\,d\boldsymbol{x}. \]
The difference $I_P(f)- I(f)$ is the QMC integration error ${\serr Pf}$.
\fi
We first introduce digital nets over the two-element field $\B=\{0,1\}$,
which is defined as follows.
\begin{dfn}[digital net over $\B$ \cite{Nied}]
\lbldfn{digital-net}
Let $n,m\ge 1$ be integers with $n\ge m$.
Let $0\le h< 2^{m}$ be an integer and $C_1,\dots,C_s$ be $n\times m$ matrices over the finite field $\B$.
We write the dyadic expansion $h=\sum_{i=1}^mh_i2^{i-1}$ and take a vector $\boldsymbol{h}=(h_1,\dots,h_m)\in(\B^m)^\top$,
where $h_i$ is considered to be an element in $\B$.
For $1\le j\le s$, we define the vector
\[(y_{h,1,j},\dots,y_{h,n,j})=\boldsymbol{h}\cdot(C_j)^{\top}\]
and a real number
\[x_j(h)=\sum_{1\le i\le n}y_{h,i,j}2^{-i}\in\I,\]
where $y_{h,i,j}$ is considered to be an element of $\{0,1\}\subset\Z$.
Then we define a digital net $P$ by $\{\boldsymbol{x}_0,\cdots,\boldsymbol{x}_{2^m-1}\}$ where $\boldsymbol{x}_h=(x_j(h))_{1\le j\le s}$.
\end{dfn}

\subsection{Error analysis via Walsh coefficients}
We express QMC integration error for functions in the normed function space $\fsu$ through Walsh coefficients, which is introduced using Walsh functions as follows.
\begin{dfn}[Walsh functions and Walsh coefficients over $\B$]
Let $f\colon\I^s\rightarrow \R$ and $\boldsymbol{k}=(k_1,\dots,k_s)\in\N^s$.
We define the $\boldsymbol{k}$-th dyadic Walsh function
$\mathrm{wal}_{\boldsymbol{k}}\colon \I \to \{\pm 1\}$ by
\begin{align*}
\mathrm{wal}_{\boldsymbol{k}}(\boldsymbol{x}):=\prod_{j=1}^s(-1)^{\sum_{i\ge 1}\kappa_{i,j}x _{i,j}}.
\end{align*}
Here we write the dyadic expansion of $k_j$ by $k_j=\sum_{i\ge 1}\kappa_{i,j} 2^{i-1}$ and $x_j$ by $x_j=\sum_{i\ge 1}x_{i,j}2^{-i}$, where infinitely many digits $x_{i,j}$ are $0$.

Using Walsh functions, we define the $\boldsymbol{k}$-th dyadic Walsh coefficient $\widehat{f}(\boldsymbol{k})$ as
\begin{align*}
\widehat{f}(\boldsymbol{k})
:=\int_{\I^s}f(\boldsymbol{x})\cdot\mathrm{wal}_{\boldsymbol{k}}(\boldsymbol{x})\, d\boldsymbol{x}.
\end{align*}
\end{dfn}
For general information on Walsh analysis see \cite{Fine,SWS} for example.
To exploit the linear structure of digital nets, we need the notion of dual nets \cite{Dick_Pill05,NiedP}.
\begin{dfn}
[dual net over $\B$ \cite{NiedP}]
We define the dual net of a digital net $P$ by
\begin{eqnarray*}
P^\bot
 :=\{\mathbf{\boldsymbol{k}}=(k_1,\dots, k_s)\in\N^s\mid
 C_1^{\top}\vec{k}_1+\cdots+C_s^{\top}\vec{k}_s=\mathbf{0}\in\B^m\},
\end{eqnarray*}
where $\vec{k}_j=(\kappa _{1,j},\dots,\kappa _{n,j})^{\top}$ for
$k_j$ with $b$-adic expansion $k_j=\sum_{i\ge 1}\kappa _{i,j}2^{i-1}$.
\end{dfn}
The following is the key lemma in analyzing QMC integration error by digital nets
(see \cite[Chapter~15]{Dick_Pill}, \cite[Lemma~18]{GSY}):
\begin{prop}
\lblprp{ErrW}
Let $f \in C^0\Ic^s$ with $\sum_{\boldsymbol{k} \in \N^s}\abs{\widehat{f}(\boldsymbol{k})} < +\infty$ and $P\subset\I^s$ be a digital net.
The QMC integration error of $f$ by $P$ is given by
\[ \serr Pf = \sum_{\boldsymbol{k} \in P^\bot\setminus\{\boldsymbol{0}\}}\widehat{f}(\boldsymbol{k}). \]
\end{prop}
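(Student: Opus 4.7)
The plan is to expand $f$ in its Walsh series and evaluate both $\reali{f}$ and $\qmci{P}{f}$ termwise. Under the hypothesis $\sum_{\boldsymbol{k}\in\N^s}\abs{\widehat{f}(\boldsymbol{k})}<\infty$, the bound $\abs{\wal{\boldsymbol{k}}}\le 1$ makes the series
\[ f(\boldsymbol{x})=\sum_{\boldsymbol{k}\in\N^s}\widehat{f}(\boldsymbol{k})\,\wal{\boldsymbol{k}}(\boldsymbol{x}) \]
absolutely and uniformly convergent; combined with $L^2$-completeness of the Walsh system and continuity of $f$, this gives a pointwise representation of $f$ on $\I^s$ (the standard finite-digit convention at dyadic points agrees with the way $\boldsymbol{x}_h$ is defined in \refdfn{digital-net}, so there is no ambiguity at points of $P$).

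Next I would integrate termwise, which is valid either by uniform convergence or by dominated convergence. Since $\int_{\I^s}\wal{\boldsymbol{k}}(\boldsymbol{x})\,d\boldsymbol{x}$ equals $1$ for $\boldsymbol{k}=\boldsymbol{0}$ and $0$ otherwise, one obtains $\reali{f}=\widehat{f}(\boldsymbol{0})$. Interchanging the finite average with the absolutely convergent Walsh series likewise yields
\[ \qmci{P}{f}=\sum_{\boldsymbol{k}\in\N^s}\widehat{f}(\boldsymbol{k})\cdot\frac{1}{N}\sum_{\boldsymbol{x}\in P}\wal{\boldsymbol{k}}(\boldsymbol{x}). \]
The crucial step is the character-sum identity
\[ \frac{1}{N}\sum_{\boldsymbol{x}\in P}\wal{\boldsymbol{k}}(\boldsymbol{x})=\begin{cases}1, & \boldsymbol{k}\in P^\bot,\\ 0, & \text{otherwise,}\end{cases} \]
which I would verify by unfolding \refdfn{digital-net}. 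Because only the first $n$ digits of each coordinate of $\boldsymbol{x}_h$ are nonzero, the exponent of $(-1)$ in $\wal{\boldsymbol{k}}(\boldsymbol{x}_h)$ reduces modulo $2$ to the $\B$-bilinear pairing $\boldsymbol{h}\cdot(C_1^{\top}\vec{k}_1+\cdots+C_s^{\top}\vec{k}_s)$. Summing $(-1)^{\boldsymbol{h}\cdot\boldsymbol{v}}$ over $\boldsymbol{h}\in\B^m$ then yields $2^m$ when $\boldsymbol{v}=\boldsymbol{0}$, i.e.\ when $\boldsymbol{k}\in P^\bot$, and $0$ otherwise by the standard orthogonality of characters on $\B^m$.

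Combining these two evaluations and using $\boldsymbol{0}\in P^\bot$ gives $\serr{P}{f}=\qmci{P}{f}-\reali{f}=\sum_{\boldsymbol{k}\in P^\bot\smallsetminus\{\boldsymbol{0}\}}\widehat{f}(\boldsymbol{k})$, which is the claim. The main obstacle is really only the digital-net character sum; the two interchanges of summation are elementary consequences of absolute summability, and the pointwise Walsh representation of $f$ follows from standard Walsh-analytic facts under the $\ell^1$ hypothesis.
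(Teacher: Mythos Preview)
Your argument is correct and is exactly the standard proof of this result. Note, however, that the paper does not give its own proof of \refprp{ErrW}; it simply states the proposition and cites \cite[Chapter~15]{Dick_Pill} and \cite[Lemma~18]{GSY}, where the argument you outline (pointwise Walsh expansion under the $\ell^1$ hypothesis, termwise integration, and the character-sum/dual-net identity for digital nets) is carried out.
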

\begin{rmk}
If $f$ has continuous derivatives $f^{(\alpha_1,\dots,\alpha_s)}$ for any nonnegative integers $\alpha_j\le 2$,
it holds that $\sum_{\boldsymbol{k} \in \N^s}\abs{\widehat{f}(\boldsymbol{k})} < +\infty$  (see \cite[Section~2]{Ywalupp}).
Note that any function in the function class $\fsu$ satisfies this assumption.
\end{rmk}

\section{Worst case error for weighted smooth function spaces}
\lblsec{proof-of-main-theorem}
In this section, we give the main theorem, which shows a simple approximation of
the worst case error ${\werr P\fsu}$.
\begin{thm}
\lblthm{e=Err}
Let $\boldsymbol{u}=(u_j)_{j=1}^s\in\R^s$ be positive real numbers and
$g_{s,\boldsymbol{u}}(\boldsymbol{x}):=\exp(-\sum_{j=1}^su_jx_j)$.
For any digital net $P$, we have
\begin{align}
\lbleq{main-result-1}
A_{s,\boldsymbol{u}}^{-1}\cdot \mathrm{Err}(g_{s,\boldsymbol{u}};P)
\le
{\werr P\fsu}
\le
2^sB_{s,\boldsymbol{u}}^{-1}\cdot \mathrm{Err}(g_{s,\boldsymbol{u}};P),
\end{align}
where
\begin{align*}
A_{s,\boldsymbol{u}}&=\prod_{j=1}^s\frac{1-\exp(-u_j)}{u_j}, \\
B_{s,\boldsymbol{u}}&=
\prod_{j=1}^s\inf_{1 \leq w}
\left(\frac{1-\exp(-2^{-w}u_j)}{2^{-w}u_j}\prod_{1\leq i \leq w}\frac{1-\exp(-2^{-i}u_j)}{2^{-i}u_j}\right)\neq 0.
\end{align*}
In particular if we choose the normalized function $\tilde{g}_{s,\boldsymbol{u}}:=g_{s,\boldsymbol{u}}(x)/\|g_{s,\boldsymbol{u}}\|_{\fsu}$, we have
\begin{align}
\lbleq{main-result-2}
\mathrm{Err}(\tilde{g}_{s,\boldsymbol{u}};P)
\le
{\werr P\fsu}
\le
\frac{2^sA_{s,\boldsymbol{u}}}{B_{s,\boldsymbol{u}}}\cdot \mathrm{Err}(\tilde{g}_{s,\boldsymbol{u}};P).
\end{align}
\end{thm}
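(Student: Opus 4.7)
The plan is to prove the two bounds of \refeq{main-result-1} separately and obtain \refeq{main-result-2} by rescaling.

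For the lower bound I would first compute $\|g_{s,\boldsymbol{u}}\|_{\fsu}=A_{s,\boldsymbol{u}}$. Since $g_{s,\boldsymbol{u}}^{(\alpha_1,\dots,\alpha_s)}(\boldsymbol{x})=(-1)^{\sum_j\alpha_j}\prod_{j=1}^s u_j^{\alpha_j}\,g_{s,\boldsymbol{u}}(\boldsymbol{x})$, the ratio $\|g_{s,\boldsymbol{u}}^{(\boldsymbol{\alpha})}\|_{L^1}/\prod_j u_j^{\alpha_j}$ is independent of $\boldsymbol{\alpha}$ and equals $\|g_{s,\boldsymbol{u}}\|_{L^1}=A_{s,\boldsymbol{u}}$. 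Therefore $\tilde g_{s,\boldsymbol{u}}:=g_{s,\boldsymbol{u}}/A_{s,\boldsymbol{u}}$ lies in the unit ball of $\fsu$, and the definition of the worst case error gives $\werr P\fsu\ge|\serr P{\tilde g_{s,\boldsymbol{u}}}|=A_{s,\boldsymbol{u}}^{-1}|\serr P{g_{s,\boldsymbol{u}}}|$, which is the left-hand inequality of \refeq{main-result-1}.

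For the upper bound I would use the Walsh expansion of \refprp{ErrW} and reduce to a pointwise comparison of Walsh coefficients of $f$ and of $g_{s,\boldsymbol{u}}$. First, for $k=\sum_{i=1}^\nu 2^{a_i-1}$ with $a_1>\cdots>a_\nu\ge 1$, a direct integration of $e^{-ux}$ against the factorisation $\mathrm{wal}_k=\prod_i\mathrm{wal}_{2^{a_i-1}}$ on dyadic intervals yields
\[
\widehat{g_u}(k)=\frac{1-e^{-u}}{u}\prod_{i=1}^\nu\frac{1-e^{-u/2^{a_i}}}{1+e^{-u/2^{a_i}}}>0,
\]
hence by tensor product $\widehat{g_{s,\boldsymbol{u}}}(\boldsymbol{k})=\prod_j\widehat{g_{u_j}}(k_j)>0$. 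Provided the pointwise estimate $|\widehat{f}(\boldsymbol{k})|\le 2^s B_{s,\boldsymbol{u}}^{-1}\|f\|_{\fsu}\widehat{g_{s,\boldsymbol{u}}}(\boldsymbol{k})$ holds for every $\boldsymbol{k}\ne\boldsymbol{0}$, the triangle inequality applied to the Walsh expansion, together with the positivity of $\widehat{g_{s,\boldsymbol{u}}}$ (so that the sum of its coefficients over $P^\perp\setminus\{\boldsymbol{0}\}$ equals $\serr P{g_{s,\boldsymbol{u}}}$ itself), yields the right-hand inequality of \refeq{main-result-1} after taking the supremum over the unit ball.

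The pointwise Walsh coefficient estimate factorises over coordinates because $\|f^{(\boldsymbol{\alpha})}\|_{L^1}\le\|f\|_{\fsu}\prod_j u_j^{\alpha_j}$ decouples into a product and $\widehat{g_{s,\boldsymbol{u}}}(\boldsymbol{k})$ is already a product. Applying $\alpha_j$ integrations by parts in the $j$-th variable and choosing each $\alpha_j$ optimally reduces the task to a one-dimensional inequality $\mu_c(k,u)\le 2B(u)^{-1}\widehat{g_u}(k)$, where $\mu_c(k,u):=\inf_{\alpha\ge 0}u^\alpha C_{k,\alpha}$ is the WAFOM-type Walsh coefficient bound ($C_{k,\alpha}$ being the $L^\infty$-norm of the $\alpha$-th iterated primitive of $\mathrm{wal}_k$) underlying $W_{\boldsymbol{u}}(P)$, and $B(u):=\inf_{w\ge 1}A'_w(u)\prod_{i=1}^w A'_i(u)$ with $A'_a(u):=(1-e^{-u/2^a})/(u/2^a)$. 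Inserting the closed form of $\widehat{g_u}(k)$ and using $1+e^{-u/2^{a_i}}\le 2$, this reduces to a purely algebraic inequality among $A(u):=(1-e^{-u})/u$ and the $A'_a(u)$.

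The delicate step is matching the constants so that no factor $2^\nu$ in the number of nonzero binary digits of $k$ survives. The naive bound with $\alpha=\nu$ combined with $1+e^{-u/2^{a_i}}\le 2$ produces exactly such a factor, and absorbing it into the infimum defining $B(u)$ is the heart of the argument. My plan is to exploit the monotonicity of $A'_a(u)$ in $a$ (it increases to $1$) so that $\prod_{i=1}^\nu A'_{a_i}(u)$ dominates the full initial segment $\prod_{i=1}^{a_1}A'_i(u)$ occurring in the $w=a_1$ term of the infimum defining $B(u)$, supplemented by analytic inequalities such as $A(u)\ge A'_1(u)^2$ (verifiable by a direct derivative computation). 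Finally, \refeq{main-result-2} follows from \refeq{main-result-1} by substituting $g_{s,\boldsymbol{u}}=A_{s,\boldsymbol{u}}\tilde g_{s,\boldsymbol{u}}$ and using linearity of the integration error in the integrand.
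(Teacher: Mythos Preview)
Your overall strategy coincides with the paper's: the lower bound via $\|g_{s,\boldsymbol u}\|_{\fsu}=A_{s,\boldsymbol u}$ is exactly what the paper does, and for the upper bound both arguments go through \refprp{ErrW} and a coefficient-wise comparison, with the weight $2^{-\mu_{\boldsymbol u}(\boldsymbol k)}$ (equivalently your $\mu_c$) as the bridge. The paper, however, organises this more modularly: it imports the sharp bound $|\widehat f(\boldsymbol k)|\le 2^s\,2^{-\mu_{\boldsymbol u}(\boldsymbol k)}\|f\|_{\fsu}$ from \cite{SY,Ywalupp} as a black box, introduces $W_{\boldsymbol u}(P)=\sum_{\boldsymbol k\in P^\perp\setminus\{0\}}2^{-\mu_{\boldsymbol u}(\boldsymbol k)}$ explicitly, and then proves $B_{s,\boldsymbol u}\,2^{-\mu_{\boldsymbol u}(\boldsymbol k)}\le \widehat{g_{s,\boldsymbol u}}(\boldsymbol k)$ separately.

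There is a genuine gap in your ``delicate step''. Starting from your telescoped formula and using $1+e^{-u/2^{a_i}}\le 2$ you obtain
\[
\frac{\widehat{g_u}(k)}{2^{-\mu_u(k)}}\;\ge\;A(u)\prod_{i=1}^{\nu}A'_{a_i}(u),
\]
and your proposed fixes (monotonicity of $A'_a$, the inequality $A(u)\ge A'_1(u)^2$) do \emph{not} deliver the stated constant $B(u)=\inf_{w\ge 1}A'_w(u)\prod_{i=1}^{w}A'_i(u)$. For instance at $u=2$, $k=1$ one has $A(2)A'_1(2)\approx 0.273$ while $B(2)\approx 0.388$, so your lower bound is strictly weaker than $B_u$; the monotonicity step only yields $A(u)\prod_{i=1}^{a_1}A'_i(u)$, and $A(u)<A'_{a_1}(u)$ prevents the comparison with the $w=a_1$ term of the infimum. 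The paper avoids this entirely by working with the \emph{non}-telescoped expression
\[
\frac{\widehat{g_u}(k)}{2^{-\mu_u(k)}}
= A'_w(u)\cdot\prod_{i\in I}A'_i(u)\cdot\prod_{i\in J}\frac{1+e^{-u/2^{i}}}{2},
\qquad w=a_1,
\]
and the elementary inequality $(1-e^{-x})/x\le(1+e^{-x})/2$, which replaces each $J$-factor by $A'_i(u)$ and yields exactly $A'_w(u)\prod_{i=1}^{w}A'_i(u)\ge B(u)$. This single inequality is the step you are missing; with it, no $2^{\nu}$ ever appears and the constant $B_{s,\boldsymbol u}$ drops out directly. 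A second, smaller point: the sharp Walsh bound $|\widehat f(k)|\le 2\cdot 2^{-\mu_1(k)}\|f^{(\nu)}\|_{L^1}$ is not simply the $L^\infty$-norm of an iterated primitive; the paper sidesteps this by citing \cite{SY,Ywalupp}.
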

\begin{rmk}
This theorem implies that the integration error of the exponential function $g_{s,\boldsymbol{u}}$ by a digital net is always positive.
\end{rmk}

Now we start the proof for this theorem.
We have only to prove the first statement.
In fact, the second statement \refeq{main-result-2} follows from the first statement \refeq{main-result-1} since $\|g_{s,\boldsymbol{u}}\|_{\fsu}= \prod_{j=1}^s(1-\exp(-u_j))/u_j=A_{s,\boldsymbol{u}}$ and thus
\begin{align}
A^{-1}_{s,\boldsymbol{u}}\cdot \mathrm{Err}\left(g_{s,\boldsymbol{u}};P\right)
=\mathrm{Err}\left(\tilde{g}_{s,\boldsymbol{u}};P\right).
\label{normalize}
\end{align}

In what follows, we provide the proof of \refeq{main-result-1}.

The left inequality in \refeq{main-result-1} is followed by (\ref{normalize}) and the definition of ${\werr P\fsu}$.

We move on to the proof on the right inequality in \refeq{main-result-1}.
This is proven by using \reflem{ewor<wf} and \ref{lem:wf<exp}.
Before starting the proof, we introduce the following criteria on a digital net $P$.
\begin{dfn}
\lbldfn{WAFOM}
Let $P\subset \I^s$ be a digital net.
For a positive real numbers $\boldsymbol{u}\in\R^s$, we define the following function of $P$
\[ W_{\boldsymbol{u}}(P) := \sum_{\boldsymbol{k} \in P^\perp \smallsetminus\{\boldsymbol{0}\}}2^{-\mu_{\boldsymbol{u}}(\boldsymbol{k})}, \]
where for $\boldsymbol{k}=(k_1,\dots,k_s)$ with $k_j= \sum_{i\ge 1}\kappa_{i,j}2^{i-1}\in \N$, we define
\[\mu_{\boldsymbol{u}}(\boldsymbol{k}) = \sum_{1 \leq j \leq s}\sum_{1 \leq i}(i+1-\lg u_j)\kappa_{i,j}.\]
\end{dfn}
When $k_j=0$, we set $\kappa_{i,j}=0$ for all $i$.
\begin{rmk}
This quantity is introduced in \cite{K:acc} (see \cite[Remark~6.4]{K:acc} for the details).
If we set $u_j=2$ for $1\le j\le s$, $W_{\boldsymbol{u}}(P)$ corresponds with the original WAFOM treated in \cite{MSM}.
\end{rmk}
We give the upper bound on the worst case error by this function $W_{\boldsymbol{u}}(P)$.
\begin{lem}
\lbllem{ewor<wf}
For any $f\in \fsu$ and a digital net $P$, we have
\begin{align}
{\serr Pf} \le 2^s\cdot \|f\|_{\fsu} \cdot  W_{\boldsymbol{u}}(P).
\end{align}
In particular, this inequality implies that
\begin{align}
{\werr P\fsu}\le 2^s\cdot W_{s,\boldsymbol{u}}(P).
\end{align}
\end{lem}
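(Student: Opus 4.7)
The plan is to combine \refprp{ErrW} with a pointwise estimate on the Walsh coefficients $\widehat{f}(\boldsymbol{k})$ in terms of $\|f\|_{\fsu}$. Since $f\in\fsu$ has continuous partial derivatives of every order, the absolute summability hypothesis of \refprp{ErrW} is satisfied, so the triangle inequality gives
\begin{align*}
\aerr{P}{f} \le \sum_{\boldsymbol{k}\in P^\perp\setminus\{\boldsymbol{0}\}} |\widehat{f}(\boldsymbol{k})|.
\end{align*}
It therefore suffices to establish the single-coefficient bound
\begin{align*}
|\widehat{f}(\boldsymbol{k})| \le 2^s\cdot\|f\|_{\fsu}\cdot 2^{-\mu_{\boldsymbol{u}}(\boldsymbol{k})}\qquad\text{for every }\boldsymbol{k}\in\N^s\setminus\{\boldsymbol{0}\};
\end{align*}
substituting this and recognising \refdfn{WAFOM} immediately yields $\aerr{P}{f}\le 2^s\|f\|_{\fsu}W_{\boldsymbol{u}}(P)$, and the bound on ${\werr P\fsu}$ follows by taking the supremum over the unit ball of $\fsu$.

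To prove the single-coefficient bound I would reduce to the one-dimensional case via the tensor-product factorisation $\wal{\boldsymbol{k}}(\boldsymbol{x})=\prod_{j=1}^{s}\wal{k_j}(x_j)$ and Fubini. For each coordinate $j$, let $\alpha_j:=\sum_{i\ge 1}\kappa_{i,j}$ denote the Hamming weight of the binary expansion of $k_j$, and apply integration by parts $\alpha_j$ times in the variable $x_j$. The key 1D fact (standard in the Walsh-coefficient theory, cf.\ \cite{Ywalupp,K:acc}) is that taking an antiderivative of $\wal{k_j}$ at an active digit position $i$ contributes a factor of at most $2^{-(i+1)}$, while the boundary terms contribute at most a constant factor of $2$. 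Carrying out this integration by parts coordinate by coordinate produces
\begin{align*}
|\widehat{f}(\boldsymbol{k})| \le 2^s \cdot \|f^{(\alpha_1,\dots,\alpha_s)}\|_{L^1} \cdot \prod_{j=1}^{s} 2^{-\sum_{i\ge 1}(i+1)\kappa_{i,j}}.
\end{align*}

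To conclude, I would apply the definition of $\|\cdot\|_{\fsu}$ in the form $\|f^{(\alpha_1,\dots,\alpha_s)}\|_{L^1}\le\|f\|_{\fsu}\prod_{j=1}^{s}u_j^{\alpha_j}$, together with the algebraic identity
\begin{align*}
2^{-\mu_{\boldsymbol{u}}(\boldsymbol{k})} = \prod_{j=1}^{s} u_j^{\alpha_j}\cdot\prod_{j=1}^{s} 2^{-\sum_{i\ge 1}(i+1)\kappa_{i,j}},
\end{align*}
which is immediate from splitting $(i+1-\lg u_j)\kappa_{i,j}$ in \refdfn{WAFOM}. Combining the two displays delivers the required single-coefficient estimate. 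The main obstacle is the sharp iterated 1D integration-by-parts calculation, together with the correct choice of differentiation order: taking $\alpha_j$ equal to the Hamming weight of $k_j$ is exactly what makes the $u_j^{\alpha_j}$ factor arising from $\|f^{(\alpha_1,\dots,\alpha_s)}\|_{L^1}$ absorb the $\lg u_j$ correction built into $\mu_{\boldsymbol{u}}$, and keeping track of boundary terms so that the overall constant stays at $2^s$ is the technical heart of the argument.
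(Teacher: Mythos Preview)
Your proposal is correct and follows essentially the same route as the paper. The only difference is that the paper quotes the Walsh-coefficient bound $|\widehat{f}(\boldsymbol{k})| \le 2^s\cdot 2^{-\mu_{(1,\dots,1)}(\boldsymbol{k})}\cdot\|f^{(\alpha_1,\dots,\alpha_s)}\|_{L^1}$ directly from \cite{SY,Ywalupp} as a black box, whereas you sketch its derivation via iterated integration by parts; the remaining steps---the choice $\alpha_j=\#\{i:\kappa_{i,j}=1\}$, the algebraic identity $\mu_{\boldsymbol{u}}(\boldsymbol{k})=\mu_{(1,\dots,1)}(\boldsymbol{k})-\sum_j\alpha_j\lg u_j$, the use of \refprp{ErrW}, and the passage to the worst-case error---are identical.
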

\begin{proof}
In \refprp{ErrW}, the QMC integration error by a digital net $P$ is rewritten
as a sum of Walsh coefficients.
Using the upper bounds on Walsh coefficients in \cite[Theorem~3.9]{SY} or \cite[Theorem~1.3]{Ywalupp}, it holds that for a function $f\in C^\infty\Ic^s$,
\[ \abs{\widehat{f}(\boldsymbol{k})} \leq 2^{s}\cdot 2^{-\mu_{(1,\dots,1)}(\boldsymbol{k})}\cdot \|f^{(\alpha_1,\dots,\alpha_s)}\|_{L^1}.\]
where $\boldsymbol{k}=(k_1,\dots,k_s)$ with $k_j=\sum_{i\ge 1}\kappa_{i,j}2^{i-1}$
and $\alpha_j=|\{i\ge 1\mid \kappa_{i,j}=1\}|$.
In this condition, we have for $\boldsymbol{u}=(u_1,\dots,u_s)$ with $u_j>0$,
\begin{align*}
\mu_{\boldsymbol{u}}(\boldsymbol{k})
&=\sum_{1 \leq j \leq s}\sum_{1 \leq i}(i+1-\lg u_j)\kappa_{i,j}\\
&=\sum_{1 \leq j \leq s}\sum_{1 \leq i}(i+1)\kappa_{i,j}
-\sum_{1 \leq j \leq s}\sum_{1 \leq i}(\lg u_j)\kappa_{i,j}\\
&=\mu_{(1,\dots,1)}(\boldsymbol{k})
-\sum_{1 \leq j \leq s}\lg u^{\alpha_j}_j.
\end{align*}
Thus, for a function $f\in\fsu$, we have
\[ \abs{\widehat{f}(\boldsymbol{k})} \leq \|f\|_{\fsu}\cdot 2^{s}\cdot 2^{-\mu_{\boldsymbol{u}}(\boldsymbol{k})}. \]
Combining this upper bound and \refprp{ErrW}, we obtain the following evaluation on the QMC error by a digital net $P$.
\[
{\serr Pf}\le 2^s\cdot \|f\|_{\fsu}
\cdot \sum_{k \in P^\perp \smallsetminus\{0\}}2^{-\mu_{\boldsymbol{u}}(k)}
=2^s\cdot \|f\|_{\fsu}\cdot  W_{\boldsymbol{u}}(P).
\]
The second inequality follows from this inequality as
\[
{\werr P\fsu}
=\sup_{\substack{f\in \fsu\\ \|f\|_{\fsu}\le 1}}{\aerr Pf}
\le 2^s\cdot1\cdot W_{\boldsymbol{u}}(P).
\]
\end{proof}
We next obtain the upper bound on $W_{\boldsymbol{u}}(P)$ by $\serr Pg$.
In fact, we can obtain both of lower and upper bounds as follows.
\begin{lem}
\lbllem{wf<exp}
For $g_{s,\boldsymbol{u}}:=\exp(-\sum_{1 \leq j \leq s}u_jx_j)$ and any digital net $P$,
it holds that
\[ A_{s,\boldsymbol{u}}^{-1}\cdot \serr P{g_{s,\boldsymbol{u}}}\le W_{\boldsymbol{u}}(P) \le B_{s,\boldsymbol{u}}^{-1}\cdot \serr P{g_{s,\boldsymbol{u}}}, \]
where $A_{s,\boldsymbol{u}}$ and $B_{s,\boldsymbol{u}}$ are the same as in \refthm{e=Err}.
\end{lem}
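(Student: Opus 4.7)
The plan is to reduce to a one-dimensional Walsh coefficient analysis using the tensor-product structure of $g_{s,\boldsymbol{u}}$. By \refprp{ErrW}, $\serr P{g_{s,\boldsymbol{u}}}=\sum_{\boldsymbol{k}\in P^\perp\setminus\{\boldsymbol{0}\}}\widehat{g_{s,\boldsymbol{u}}}(\boldsymbol{k})$, and since $g_{s,\boldsymbol{u}}$ factors coordinatewise, $\widehat{g_{s,\boldsymbol{u}}}(\boldsymbol{k})=\prod_{j=1}^s\phi_{u_j}(k_j)$, where $\phi_u(k):=\int_0^1 e^{-ux}\mathrm{wal}_k(x)\,dx$; similarly $2^{-\mu_{\boldsymbol{u}}(\boldsymbol{k})}=\prod_j 2^{-\mu_{u_j}(k_j)}$. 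So it suffices to establish a termwise bound $B_u\cdot 2^{-\mu_u(k)}\le\phi_u(k)\le A_u\cdot 2^{-\mu_u(k)}$ for every $k\in\N$, with $A_u=(1-e^{-u})/u$ and $B_u$ the one-dimensional version of $B_{s,\boldsymbol{u}}$; multiplying over $j$ and summing over $\boldsymbol{k}\in P^\perp\setminus\{\boldsymbol{0}\}$ then finishes the lemma.

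For the univariate coefficients, splitting the integral at $x=1/2$ and using $\mathrm{wal}_{2k'}(x)=\mathrm{wal}_{k'}(\{2x\})$ gives the dyadic recursion $\phi_u(2k')=\frac{1+e^{-u/2}}{2}\phi_{u/2}(k')$, $\phi_u(2k'+1)=\frac{1-e^{-u/2}}{2}\phi_{u/2}(k')$. Starting from $\phi_u(0)=A_u$ and unwinding via the digit expansion $k=\sum_{i\ge 1}\kappa_i 2^{i-1}$, combined with the telescoping identity $(1+e^{-u/2^i})(1-e^{-u/2^i})=1-e^{-u/2^{i-1}}$, yields the closed form
\[\phi_u(k)=\prod_{\kappa_i=0}\frac{1+e^{-u/2^i}}{2}\prod_{\kappa_i=1}\frac{1-e^{-u/2^i}}{2}=A_u\prod_{\kappa_i=1}\tanh(u/2^{i+1}).\]
In particular $\phi_u(k)>0$, which incidentally proves the remark after \refthm{e=Err}. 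The upper bound is immediate from $\tanh(t/2)\le t/2$: indeed $\phi_u(k)\le A_u\prod_{\kappa_i=1}(u/2^{i+1})=A_u\cdot 2^{-\mu_u(k)}$.

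For the lower bound, let $w$ be the position of the highest nonzero digit of $k$. The tail $\prod_{i>w}(1+e^{-u/2^i})/2$ telescopes to $f(u/2^w)$ where $f(t):=(1-e^{-t})/t$, and writing $(1-e^{-u/2^i})/2=(u/2^{i+1})f(u/2^i)$ recasts the ratio as
\[\frac{\phi_u(k)}{2^{-\mu_u(k)}}=f(u/2^w)\prod_{i\le w,\,\kappa_i=1}f(u/2^i)\prod_{i\le w,\,\kappa_i=0}\frac{1+e^{-u/2^i}}{2}.\]
An elementary calculus check (the function $t+(t+2)e^{-t}-2$ vanishes to first order at $0$ and has second derivative $te^{-t}\ge 0$) gives $f(t)\le(1+e^{-t})/2$; hence for each fixed $w$ the adversary choice is $\kappa_i=1$ for every $i\le w$, i.e.\ $k=2^w-1$, with value $f(u/2^w)\prod_{i=1}^w f(u/2^i)$. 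Taking the infimum over $w\ge 1$ recovers exactly the one-dimensional $B_u$, so $\phi_u(k)\ge B_u\cdot 2^{-\mu_u(k)}$ for all $k\ne 0$. To extend the termwise bound to $k=0$ one also needs $A_u\ge B_u$, which follows from $A_u\ge f(u/2)^2\ge B_u$ (the left inequality being another short calculus check, the right from taking $w=1$ in the infimum). Multiplying and summing then yields $B_{s,\boldsymbol{u}}W_{\boldsymbol{u}}(P)\le\serr P{g_{s,\boldsymbol{u}}}\le A_{s,\boldsymbol{u}}W_{\boldsymbol{u}}(P)$, which is the lemma. The main obstacle is the lower bound: identifying the worst-case shape $k=2^w-1$ rests on the monotonicity inequality $f(t)\le(1+e^{-t})/2$; once this is in hand, the remaining steps are bookkeeping.
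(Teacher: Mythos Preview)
Your proof is correct and follows essentially the same route as the paper's: reduce to one variable by the tensor structure, compute the Walsh coefficients of $e^{-ux}$ explicitly, factor out $2^{-\mu_u(k)}$, and use the key inequality $\frac{1-e^{-t}}{t}\le\frac{1+e^{-t}}{2}$ to identify $k=2^w-1$ as the minimizer for the lower bound. The only cosmetic differences are that you obtain the coefficient via a dyadic recursion and repackage it as $A_u\prod_{\kappa_i=1}\tanh(u/2^{i+1})$ (making the upper bound a one-liner via $\tanh t\le t$), whereas the paper integrates over dyadic intervals directly and telescopes the $\frac{1+e^{-u/2^i}}{2}$ factors; your explicit check that $A_u\ge B_u$ for the $k=0$ case is also a small addition.
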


Using these lemmas, we can finish the proof of \refthm{e=Err}.
In fact, combining \reflem{ewor<wf} and \ref{lem:wf<exp}, we get the right inequality in \refeq{main-result-1} as follows.
\begin{align*}
{\werr P\fsu}
\le2^s\cdot  W_{\boldsymbol{u}}(P)
\le2^s\cdot  B_{s,\boldsymbol{u}}^{-1}\cdot \serr P{g_{s,\boldsymbol{u}}},
\end{align*}
which is the right inequality in \refeq{main-result-1}.

In the next subsection, we provide the proof of \reflem{wf<exp}.

\subsection{Approximation on $W_{\boldsymbol{u}}(P)$ by $\serr P{g_{s,\boldsymbol{u}}}$}
\reflem{wf<exp} is proven by calculating the Walsh coefficients of exponential functions.
We first consider the case $s=1$, in which we explicitly calculate the Walsh coefficients of exponential functions as follows.
\begin{lem}
\lbllem{wal-exp}
Let $a$ be a nonzero real number and $g \colon \I \to \R; x \mapsto \exp(ax)$.
Let $k$ be a nonnegative integer and $(\kappa_j)_{1 \leq j \leq w}$ be the binary representation with $\kappa_{w} = 1$, i.e., $k = \sum_{1 \leq i \leq w}2^{i-1}\kappa_i$.
If $k = 0$, we set $w = 0$ and interpret the sum to be empty.

Then, the $k$-th Walsh coefficient $\widehat{g}(k)$ of $g$ is calculated as
\[ \widehat{g}(k) = \frac{g(2^{-w})-1}{a}\prod_{1 \le i \leq w}(1+(-1)^{\kappa_i}g(2^{-i})). \]
\end{lem}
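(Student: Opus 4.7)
The plan is to exploit the fact that when $k$ has bit length $w$ (so $\kappa_w=1$ and $\kappa_i=0$ for $i>w$), the Walsh function $\mathrm{wal}_k$ depends only on the first $w$ dyadic digits of $x$, and is therefore constant on each interval $[\ell/2^w,(\ell+1)/2^w)$ for $\ell=0,1,\dots,2^w-1$. I would split $\widehat{g}(k)=\int_0^1 e^{ax}\mathrm{wal}_k(x)\,dx$ over these $2^w$ pieces. Each piece contributes
\[
\int_{\ell/2^w}^{(\ell+1)/2^w}e^{ax}\,dx \;=\; \frac{g(2^{-w})-1}{a}\,e^{a\ell/2^w},
\]
so, pulling out the common prefactor, the claim reduces to the identity
\[
\sum_{\ell=0}^{2^w-1}\mathrm{wal}_k(\ell/2^w)\,e^{a\ell/2^w} \;=\; \prod_{i=1}^{w}\bigl(1+(-1)^{\kappa_i}g(2^{-i})\bigr).
\]

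The heart of the proof is this factorization, and this is the step I expect to be the main obstacle. I would write the $w$-bit binary expansion $\ell=\sum_{j=1}^{w}\ell_j 2^{j-1}$, so that $e^{a\ell/2^w}=\prod_{j=1}^w e^{a\ell_j 2^{j-1-w}}$ immediately splits as a product over bits. Matching the dyadic digits of $x=\ell/2^w$ with $(\ell_1,\dots,\ell_w)$ in reverse order gives $\mathrm{wal}_k(\ell/2^w)=\prod_{j=1}^w(-1)^{\kappa_{w-j+1}\ell_j}$, which also factors. The joint sum over $(\ell_1,\dots,\ell_w)\in\{0,1\}^w$ then collapses to a product of $w$ two-term sums, each evaluating to $1+(-1)^{\kappa_{w-j+1}}g(2^{j-1-w})$. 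Reindexing $i=w-j+1$ yields the claimed product.

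The only real bookkeeping is the reversal of digit order between the binary expansion of $\ell$ (read from the least significant bit) and the dyadic expansion of $\ell/2^w$ (read from the most significant bit); once the indices are lined up carefully, the rest is mechanical. The boundary case $k=0$ (so $w=0$ with an empty product interpreted as $1$) is automatically consistent with the direct computation $\widehat{g}(0)=(e^a-1)/a=(g(1)-1)/a$, so no separate case analysis is required.
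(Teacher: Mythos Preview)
Your proof is correct and follows essentially the same approach as the paper: split $[0,1)$ into $2^w$ dyadic subintervals on which $\mathrm{wal}_k$ is constant, pull out the common factor $(g(2^{-w})-1)/a$, and factor the remaining sum over the bits of $\ell$. The only cosmetic difference is that the paper indexes the bits of $\ell$ via $\ell=2^{w-1}\zeta_1+\dots+2^0\zeta_w$, so that $\zeta_i$ already coincides with the $i$-th dyadic digit of $\ell/2^w$ and no reindexing is needed, whereas you use the standard expansion $\ell=\sum_j\ell_j 2^{j-1}$ and then reindex $i=w-j+1$; both reach the same product.
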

\begin{proof}
Since the case of $k = 0$ is easily proven by direct calculation, we assume $k > 0$ hereafter.

In the following calculation, we divide the interval $\I$ into $2^w$ intervals $\hoi{2^{-w}i}{2^{-w}(i+1)}$ for $0 \leq i < 2^w$, in each of which the $k$-th Walsh function remains constant.
We use the binary inverse expansion $(\zeta_j)_{j=1}^w$ of $l = 2^{w-1}\zeta_1 + \dots + 2^0\zeta_{w}$ which corresponds with the binary expansion of $x=\sum_{i\ge 1}\zeta_i2^{-i}$ if $x \in \hoi{2^{-w}l}{2^{-w}(l+1)}$.
The following computation proves the lemma:
\begin{align*}
\widehat{g}(k)
&= \int_0^1\wal k(x)\,\exp (ax)\,dx \\
&= \sum_{0 \leq l < 2^w}\wal k\mathord{\left(\frac l{2^w}\right)}\exp\left(\frac {a\cdot l}{2^w}\right)\int_0^{2^{-w}}\exp(ax)\,dx \\
&= \sum_{0 \leq l < 2^w}\prod_{1 \leq i \leq w}(-1)^{\kappa_i\zeta_i}\prod_{1
 \leq i \leq w}\exp\left(\frac{a\cdot2^{w-i}\cdot \zeta_i}{2^w}\right)\cdot \frac1a(\exp (2^{-w}a)-1) \\
&= \sum_{0 \leq l < 2^w}\prod_{1 \leq i \leq w}
\left((-1)^{\kappa_i}\exp\left(\frac a{2^i}\right)\right)^{\zeta_i}\cdot \frac1a(\exp (2^{-w}a)-1) \\
&= \frac{\exp (2^{-w}a)-1}{a}\sum_{(\zeta_i)_{i=1}^w \in \left\{0,1\right\}^{w}}\prod_{1 \leq i \leq w}\mathord{\left((-1)^{\kappa_i}\exp(2^{-i}a\right)})^{\zeta_i} \\
&= \frac{\exp (2^{-w}a)-1}{a}\prod_{1 \leq i \leq w}(1 + (-1)^{\kappa_i}\exp (2^{-i}a))\\
&= \frac{g(2^{-w})-1}{a}\prod_{1 \leq i \leq w}(1 + (-1)^{\kappa_i}g(2^{-i})). \qedhere
\end{align*}
\end{proof}
Using this lemma, we give the upper and lower bound on the Walsh coefficients of
$\exp(-ux)$ with $u>0$.
\begin{lem}
\lbllem{wal-bound}
Let $u>0$ and $g_u \colon \I \to \R; x \mapsto \exp(-ux)$.
It holds that
\[ B_u 2^{-\mu_u(k)}\leq \widehat{g}(k) \leq A_u 2^{-\mu_u(k)}\]
for all nonnegative $k$, where
$A_{u},B_{u}$ equals $A_{1,u},B_{1,u}$ appearing in \refthm{e=Err}, respectively.
If $k=0$, we have
\[
A_u 2^{-\mu_u(0)}=\widehat{g}(0).
\]
\end{lem}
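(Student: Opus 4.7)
My plan is to substitute the closed-form expression from \reflem{wal-exp}, derive a clean normalized identity for $\widehat{g}(k)/2^{-\mu_u(k)}$, and then reduce both inequalities to a single elementary one-variable calculus inequality together with a telescoping relation. The case $k=0$ is immediate: $w=0$, $\widehat{g}(0)=(1-e^{-u})/u=A_u$, and $\mu_u(0)=0$, so $A_u\cdot 2^{-\mu_u(0)}=\widehat{g}(0)$ at equality. For $k\ge 1$ I write $k=\sum_{i=1}^w\kappa_i 2^{i-1}$ with $\kappa_w=1$ and introduce
\[
r_i:=\frac{1-\exp(-2^{-i}u)}{2^{-i}u}\qquad(i\ge 0),
\]
so that $A_u=r_0$ and the $w$-th term in the infimum defining $B_u$ equals $r_w^2\prod_{i=1}^{w-1}r_i$. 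Substituting $1-e^{-2^{-i}u}=2^{-i}u\,r_i$ into each $\kappa_i=1$ factor and $1+e^{-2^{-i}u}=2(1-2^{-i-1}u\,r_i)$ into each $\kappa_i=0$ factor, the extracted powers of $u$ and $2$ cancel exactly against those in $2^{-\mu_u(k)}=u^{\alpha}2^{-\alpha-\sum_i i\kappa_i}$ (with $\alpha=\sum_i\kappa_i$), leaving the clean identity
\[
\frac{\widehat{g}(k)}{2^{-\mu_u(k)}} = r_w\prod_{i:\kappa_i=1}r_i\prod_{i:\kappa_i=0}(1-2^{-i-1}u\,r_i).
\]

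The heart of the argument is the elementary inequality $(1-e^{-x})(1+x/2)\le x$ for $x\ge 0$, which I would verify by showing that $h(x):=x-(1-e^{-x})(1+x/2)$ satisfies $h(0)=h'(0)=0$ and $h''(x)=(x/2)e^{-x}\ge 0$. Taking $x=2^{-i}u$ rewrites it as $r_i(1+2^{-i-1}u)\le 1$, equivalently $(1-2^{-i-1}u\,r_i)\ge r_i$. Combined with the telescoping identity $r_{i-1}=r_i(1-2^{-i-1}u\,r_i)$, which I would read off from $(1-e^{-2x})/(2x)=(1-e^{-x})(1+e^{-x})/(2x)$, the same inequality is also equivalent to $r_i^2\le r_{i-1}$.

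For the lower bound, I replace every factor $(1-2^{-i-1}u\,r_i)$ in the identity by $r_i$ to obtain
\[
\frac{\widehat{g}(k)}{2^{-\mu_u(k)}}\ge r_w\prod_{i=1}^w r_i = r_w^2\prod_{i=1}^{w-1}r_i \ge B_u,
\]
with positivity $B_u>0$ coming from $1-r_i\sim 2^{-i-1}u$ being summable. For the upper bound, iterating $r_{i-1}=r_i(1-2^{-i-1}u\,r_i)$ gives $r_0=r_w\prod_{i=1}^w(1-2^{-i-1}u\,r_i)$, and dividing the identity by this expression yields
\[
\frac{\widehat{g}(k)/2^{-\mu_u(k)}}{r_0}=\prod_{i:\kappa_i=1}\frac{r_i}{1-2^{-i-1}u\,r_i}=\prod_{i:\kappa_i=1}\frac{r_i^2}{r_{i-1}}\le 1,
\]
i.e., $\widehat{g}(k)\le A_u 2^{-\mu_u(k)}$. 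The main hurdle I anticipate is discovering the telescoping relation $r_{i-1}=r_i(1-2^{-i-1}u\,r_i)$ and its companion $r_i^2\le r_{i-1}$; once these are in place the two directions collapse into the same scalar inequality.
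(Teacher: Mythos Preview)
Your proposal is correct and follows essentially the same route as the paper. Both proofs factor $\widehat{g}(k)$ using \reflem{wal-exp}, extract $2^{-\mu_u(k)}$ exactly, reduce both bounds to the single inequality $(1-e^{-x})/x \le (1+e^{-x})/2$ (your form $(1-e^{-x})(1+x/2)\le x$ is an algebraic rewriting of this), and exploit the telescoping $(1+e^{-2^{-i}u})(1-e^{-2^{-i}u})=1-e^{-2^{-(i-1)}u}$, which is exactly your $r_{i-1}=r_i(1-2^{-i-1}u\,r_i)$. Your $r_i$ notation and the reformulation $r_i^2\le r_{i-1}$ package the argument a bit more cleanly, and your second-derivative verification of the key inequality is more explicit than the paper's mean-value-theorem remark, but the underlying ideas are identical.
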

\begin{proof}
We use the symbol in the proof of \reflem{wal-exp}.
In the case $k=0$, it follows from the above theorem directly.

We assume that $k\neq 0$.
Let $I := \{i \leq w \mid \kappa_i = 1\}$ and $J := \{i < w \mid \kappa_i = 0\}$.
From \reflem{wal-exp} we have
\begin{align*}
\widehat{g_u}(k)
&= \frac{1-g_u(2^{-w})}{u}\prod_{1 \leq i \le w}\left(1+(-1)^{\kappa_i}g_u(2^{-i})\right) \\
&= \frac{1-g_u(2^{-w})}{u}\cdot\prod_{i \in I}\left(1-g_u(2^{-i})\right)\cdot\prod_{i \in J}\left(1+g_u(2^{-i})\right) \\
&= 2^{-w}\cdot \frac{1-g_u(2^{-w})}{2^{-w}u}\cdot
\prod_{i \in I}2^{-i}u\frac{1-g_u(2^{-i})}{2^{-i}u}\cdot
\prod_{i \in J}2\frac{1+g_u(2^{-i})}2\\
&= \left(2^{-w}\cdot\prod_{i \in I}2^{-i}u\cdot\prod_{i \in J}2\right)
\cdot\left(\frac{1-g_u(2^{-w})}{2^{-w}u}\cdot\prod_{i \in I}\frac{1-g_u(2^{-i})}{2^{-i}u}\cdot\prod_{i \in J}\frac{1+g_u(2^{-i})}2\right).
\end{align*}
The former term can be calculated as follows.
\begin{align*}
2^{-w}\cdot\prod_{i \in I}2^{-i}u\cdot\prod_{i \in J}2
&= \prod_{i \in I}2^{-i-1}u\prod_{j \in J}1\\
&= \prod_{i=1}^w2^{-(i+1-\lg u)\kappa_i}
= 2^{-\mu_u(k)}.
\end{align*}
Thus, it suffices to give bounds of the form
\begin{equation}
\lbleq{forLU}
B_u \le
\frac{1-g_u(2^{-w})}{2^{-w}u}\cdot\prod_{i \in I}\frac{1-g_u(2^{-i})}{2^{-i}u}\cdot\prod_{i \in J}\frac{1+g_u(2^{-i})}2
\le A_u.
\end{equation}

For fixed $w$, i.e., if $2^{w-1} \leq k < 2^{w}$, it holds that
\begin{equation}
\lbleq{ineq1}
\prod_{1 \leq i \leq w}\frac{1-g(2^{-i})}{2^{-i}u}
  \leq
\prod_{i \in I}\frac{1-g_u(2^{-i})}{2^{-i}u}\prod_{j \in J}\frac{1+g_u(2^{-i})}2
  \leq
\prod_{1 \leq i \leq w}\frac{1+g_u(2^{-i})}2,
\end{equation}
where we use the fact that $(1-g_u(x))/ux=-g_u'(y_x)<(1+g_u(x))/2$ holds for every $x>0$ and some $0< y_x< ux$.
The right hand side in \refeq{ineq1} is calculated as
\begin{align*}
\prod_{1 \leq i \leq w}\frac{1+g(2^{-i})}2
&= 2^{-w}\prod_{1 \leq i \leq w}(1+\exp(-2^{-i}u))
= 2^{-w}\prod_{1 \leq i \leq w}(1+\exp(-2^{-w}u\cdot2^{i-1})) \\
&= 2^{-w}\frac{1-\exp(-2^{-w}u\cdot 2^w)}{1-\exp(-2^{-w}u)}
= \frac{2^{-w}}{1-g_u(-2^{-w})}(1-\exp(-u)).
\end{align*}
By inserting this in the right term of \refeq{ineq1} and multiplying
$(1-g_u(2^{-w}))/2^{-w}u$ we obtain
\begin{align*}
&\mathrel{\phantom\le}\frac{1-g_u(2^{-w})}{2^{-w}u}\cdot\prod_{i \in I}\frac{1-g_u(2^{-i})}{2^{-i}u}\cdot\prod_{i \in J}\frac{1+g_u(2^{-i})}2\\
&\le\frac{1-g_u(2^{-w})}{2^{-w}u}\frac{2^{-w}}{1-g_u(-2^{-w})}(1-\exp(-u))
=  \frac{1-\exp(-u)}{u}.
\end{align*}
Thus, $A_u := (1-\exp(-u))/u$ satisfies the upper bound in \refeq{forLU}.

Next we move on to the lower bound of \refeq{forLU}.
Since $(1-\exp(-x))/x < 1$ holds for all $x>0$ and
$(1-g_u(2^{-w}))/(2^{w}u)$ goes to $1$ as $w$ goes to infinity,
it is enough to show that the infinite product
\[ \prod_{1\leq i}\frac{1-g_u(2^{-i})}{2^{-i}u} \]
converges into some nonzero value.

Since $\exp(-x) < (1-\exp(-x))/x < 1$ for all positive $x$,
\[ \prod_{1 \leq i}\exp(-2^{-i}u) \leq \prod_{1 \leq i}\frac{1-g_u(2^{-i})}{2^{-i}u} \leq 1 \]
holds, whose left hand side converges since $\sum_{1\leq i}-2^{-i}u=-u$.
Thus there exists
\[0 < B_u := \inf_{1 \leq w}\left(\frac{1-g_u(2^{-w})}{2^{-w}u}\prod_{1\leq i \leq w}\frac{1-g_u(2^{-i})}{2^{-i}u}\right) \]
which satisfies the claim.
\end{proof}
\begin{rmk}
For $u=2$, which corresponds to the case of the one-dimensional original WAFOM, the value of $B_2$ and $A_2$ are approximately $0.388$ and $0.432$.
\end{rmk}

Next we give bounds on the Walsh coefficients of $\exp(-\sum_{j=1}u_jx_j)$,
which is easily calculated using the above one-dimensional result.
\begin{lem}
\lbllem{wal-bound-multi}
Let $(u_j)_{j=1}^s\in\mathbb{R}^s$ be positive and $g_{s,\boldsymbol{u}}(\boldsymbol{x})=\exp(-\sum_{j=1}^su_jx_j)$.
It holds that
\[ B_{s,\boldsymbol{u}} 2^{-\mu_{\boldsymbol{u}}(\boldsymbol{k})}\leq \widehat{g}_{s,\boldsymbol{u}}(\boldsymbol{u}) \leq A_{s,\boldsymbol{u}} 2^{-\mu_{\boldsymbol{u}}(\boldsymbol{k})}\]
for all $\boldsymbol{k}\in\N^s$, where
$A_{s,\boldsymbol{u}}$ and $B_{s,\boldsymbol{u}}$ are the same as in \refthm{e=Err}.
\end{lem}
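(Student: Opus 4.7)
The plan is to reduce the multidimensional claim to the one-dimensional bound already established in Lemma \ref{lem:wal-bound} by exploiting the tensor product structure of both the integrand and the Walsh functions. Since $g_{s,\boldsymbol{u}}(\boldsymbol{x}) = \prod_{j=1}^s \exp(-u_j x_j)$ is a product of one-variable functions and $\wal{\boldsymbol{k}}(\boldsymbol{x}) = \prod_{j=1}^s \wal{k_j}(x_j)$, Fubini's theorem gives immediately
\[ \widehat{g}_{s,\boldsymbol{u}}(\boldsymbol{k}) = \prod_{j=1}^s \widehat{g}_{u_j}(k_j), \]
where $g_{u_j}(x) := \exp(-u_j x)$ is the one-dimensional exponential treated in Lemma \ref{lem:wal-bound}.

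Next, I would apply the one-dimensional two-sided bound to each factor. Lemma \ref{lem:wal-bound} states that $B_{u_j} 2^{-\mu_{u_j}(k_j)} \leq \widehat{g}_{u_j}(k_j) \leq A_{u_j} 2^{-\mu_{u_j}(k_j)}$; note that $B_{u_j} > 0$, so each factor is strictly positive and the inequalities may be multiplied coordinatewise to yield
\[ \prod_{j=1}^s B_{u_j} 2^{-\mu_{u_j}(k_j)} \leq \widehat{g}_{s,\boldsymbol{u}}(\boldsymbol{k}) \leq \prod_{j=1}^s A_{u_j} 2^{-\mu_{u_j}(k_j)}. \]

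Finally I would check that the products of the one-dimensional constants recombine into the advertised $A_{s,\boldsymbol{u}}$ and $B_{s,\boldsymbol{u}}$, which is immediate from their definitions in \refthm{e=Err}, and that $\mu_{\boldsymbol{u}}$ is additive across coordinates, i.e., $\sum_{j=1}^s \mu_{u_j}(k_j) = \mu_{\boldsymbol{u}}(\boldsymbol{k})$. This last identity follows directly by unpacking the definition
\[ \mu_{\boldsymbol{u}}(\boldsymbol{k}) = \sum_{j=1}^s \sum_{i \geq 1} (i+1-\lg u_j)\kappa_{i,j} = \sum_{j=1}^s \mu_{u_j}(k_j), \]
so $\prod_j 2^{-\mu_{u_j}(k_j)} = 2^{-\mu_{\boldsymbol{u}}(\boldsymbol{k})}$ and the desired bound follows.

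There is no real obstacle here; the proof is essentially a bookkeeping exercise. The only point worth being careful about is confirming the positivity of each $\widehat{g}_{u_j}(k_j)$ (which justifies multiplying inequalities), and this is guaranteed by the strict positivity of $B_{u_j}$ asserted in Lemma \ref{lem:wal-bound}.
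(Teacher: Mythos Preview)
Your proposal is correct and follows essentially the same route as the paper: factor $\widehat{g}_{s,\boldsymbol{u}}(\boldsymbol{k})=\prod_{j=1}^s\widehat{g}_{u_j}(k_j)$ via the tensor structure, apply \reflem{wal-bound} coordinatewise, and combine using $A_{s,\boldsymbol{u}}=\prod_j A_{u_j}$, $B_{s,\boldsymbol{u}}=\prod_j B_{u_j}$ and $\mu_{\boldsymbol{u}}(\boldsymbol{k})=\sum_j\mu_{u_j}(k_j)$. Your explicit remark that positivity of each factor (from $B_{u_j}>0$) is needed to multiply the two-sided inequalities is a point the paper leaves implicit.
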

\begin{proof}
Since $g_{s,\boldsymbol{u}}(\boldsymbol{x})$ and $\mathrm{wal}_{\boldsymbol{k}}(\boldsymbol{x})$ is written by the product of univariate functions
$\exp(-u_jx_j)$ and $\mathrm{wal}_{k_j}(x_j)$, respectively, we have
\begin{align*}
\widehat{g}_{s,\boldsymbol{u}}(\boldsymbol{k})
&=\int_{\I^s}\prod_{j=1}^s\exp(-u_jx_j)\mathrm{wal}_{k_j}(x_j)\,dx_1\dots dx_s\\
&=\prod_{j=1}^s\int_{\I}\exp(-u_jx_j)\mathrm{wal}_{k_j}(x_j)\,dx_j
=\prod_{j=1}^s\widehat{g}_{u_j}(k_j).
\end{align*}
Combining this with \reflem{wal-bound} we get
\begin{align*}
B_{s,\boldsymbol{u}}\cdot 2^{-\mu_{\boldsymbol{u}}(\boldsymbol{k})}
=
\prod_{j=1}^sB_{u_j}2^{-\mu_{u_j}(k_j)}
\le
\widehat{g}_{s,\boldsymbol{u}}(\boldsymbol{k})
\le
\prod_{j=1}^sA_{u_j}2^{-\mu_{u_j}(k_j)}
=
A_{s,\boldsymbol{u}}\cdot 2^{-\mu_{\boldsymbol{u}}(\boldsymbol{k})}.
\end{align*}
\end{proof}

This inequality and \refprp{ErrW} leads to \reflem{wf<exp} as
\begin{align*}
B_{s,\boldsymbol{u}}\cdot
W_{\boldsymbol{u}}(P)
\le
\mathrm{Err}(g_{s,\boldsymbol{u}};P)=\sum_{\boldsymbol{k}\in P^\bot\setminus\{0\}}\widehat{g}_{s,\boldsymbol{u}}(k)
\le
A_{s,\boldsymbol{u}}\cdot
W_{\boldsymbol{u}}(P).
\end{align*}

\subsection{Approximation for the worst case error by the quantity $W_{s,\boldsymbol{u}}(P)$}
As we mentioned in the last part of \refsec{introduction}, $\werr P\fsu$ is also bounded below and above by $W_{\boldsymbol{u}}(P)$ as follows.
\begin{cor}
\lblcor{e=wf}
Let $\boldsymbol{u}=(u_j)_{j=1}^s$ be positive real numbers.
For any digital net $P$, we have
\begin{align*}
\frac{A_{s,\boldsymbol{u}}}{B_{s,\boldsymbol{u}}}\cdot W_{\boldsymbol{u}}(P)
\le
{\werr P\fsu}
\le
2^s\cdot W_{\boldsymbol{u}}(P),
\end{align*}
where $A_{s,\boldsymbol{u}}$ and $B_{s,\boldsymbol{u}}$ are the same as in \refthm{e=Err}.
\end{cor}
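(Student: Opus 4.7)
The plan is that \refcor{e=wf} follows with no new analytic work: both bounds are immediate consequences of the results already proven in this section, composed transitively, so I would assemble the pieces rather than develop an independent argument.

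For the upper bound $\werr P\fsu \le 2^s\,W_{\boldsymbol{u}}(P)$, I would simply invoke the second displayed inequality of \reflem{ewor<wf}, which is verbatim this statement; there is nothing more to do on that side.

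For the lower bound, I would chain the left half of \refthm{e=Err}, namely $\werr P\fsu \ge A_{s,\boldsymbol{u}}^{-1}\,\mathrm{Err}(g_{s,\boldsymbol{u}};P)$, together with the right half of \reflem{wf<exp}, which after multiplying both sides by $B_{s,\boldsymbol{u}}>0$ reads $\mathrm{Err}(g_{s,\boldsymbol{u}};P) \ge B_{s,\boldsymbol{u}}\,W_{\boldsymbol{u}}(P)$. Concatenating the two inequalities yields the claimed lower bound, with the constant appearing as the natural product of $A_{s,\boldsymbol{u}}^{-1}$ and $B_{s,\boldsymbol{u}}$.

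There is essentially no obstacle; the only care needed is to keep the two halves of \reflem{wf<exp} straight, since both directions are used across the section but for opposite purposes, and to remember that the sign of $\mathrm{Err}(g_{s,\boldsymbol{u}};P)$ (guaranteed nonnegative by the remark following \refthm{e=Err}) allows the chaining without worrying about sign flips. Conceptually, the corollary records a transitivity statement: \refthm{e=Err} says $\mathrm{Err}(g_{s,\boldsymbol{u}};P)$ approximates $\werr P\fsu$ up to $P$-independent constants, \reflem{wf<exp} says $\mathrm{Err}(g_{s,\boldsymbol{u}};P)$ likewise approximates $W_{\boldsymbol{u}}(P)$, and composing these two equivalences gives the equivalence of $W_{\boldsymbol{u}}(P)$ and $\werr P\fsu$, which is what justifies treating either quantity as a viable search criterion for QMC rules over $\fsu$.
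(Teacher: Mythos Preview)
Your approach is exactly the paper's: the upper bound is quoted verbatim from \reflem{ewor<wf}, and the lower bound is obtained by chaining the right half of \reflem{wf<exp} with the left half of \refthm{e=Err}, passing through $\mathrm{Err}(g_{s,\boldsymbol{u}};P)$ as the intermediate quantity. One caveat worth flagging: the product you compute, $A_{s,\boldsymbol{u}}^{-1}\cdot B_{s,\boldsymbol{u}} = B_{s,\boldsymbol{u}}/A_{s,\boldsymbol{u}}$, is the reciprocal of the constant $A_{s,\boldsymbol{u}}/B_{s,\boldsymbol{u}}$ printed in the corollary; the paper's own displayed chain has the matching slip (it writes $A_{s,\boldsymbol{u}}\,\mathrm{Err}(g_{s,\boldsymbol{u}};P)$ where $A_{s,\boldsymbol{u}}^{-1}\,\mathrm{Err}(g_{s,\boldsymbol{u}};P)$ is what \refthm{e=Err} actually gives), so this is a typo in the stated constant rather than a defect in your argument.
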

\begin{proof}
The left inequality is deduced as
\begin{align*}
\frac{A_{s,\boldsymbol{u}}}{B_{s,\boldsymbol{u}}}\cdot W_{\boldsymbol{u}}(P)
\le
A_{s,\boldsymbol{u}}\mathrm{Err}(g_{s,\boldsymbol{u}};P)
\le
{\werr P\fsu},
\end{align*}
where we used \reflem{wf<exp} in the first inequality and \refthm{e=Err} in the second one.
The right inequality
directly follows from \reflem{ewor<wf}.
\end{proof}
Moreover, we have the following formula as in the case of the original WAFOM\@.
\begin{cor}
\lblcor{wf-calc-formula}
For any digital net $P$ with $N=|P|$, we have
\[
W_{\boldsymbol{u}}(P) =
\frac{1}{N}
\sum_{\boldsymbol{x}\in P}
\prod_{1\leq j \leq s}\prod_{1\leq i}\left(1+(-1)^{\zeta_{i,j}}2^{-(i+1-\lg u_j)}\right)-1,
\]
where we write $\boldsymbol{x} = (\sum_{i\ge 1}2^{i-1}\zeta_{i,j})$.
\end{cor}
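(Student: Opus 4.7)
The plan is to expand the infinite product into a Walsh expansion and then invoke the standard character-sum identity for digital nets. All manipulations are justified by absolute convergence, since $\sum_{i\ge 1}2^{-(i+1-\lg u_j)}=u_j/2<\infty$ for each $j$, so the infinite product converges absolutely and the rearrangements below are legitimate.

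Fixing $j$ and distributing the product, each factor offers a choice between $1$ and $(-1)^{\zeta_{i,j}}2^{-(i+1-\lg u_j)}$; identifying the set of indices where the second summand is chosen with an integer $k_j=\sum_{i\ge 1}\kappa_{i,j}2^{i-1}\in\N$ gives
\[
\prod_{i\ge 1}\bigl(1+(-1)^{\zeta_{i,j}}2^{-(i+1-\lg u_j)}\bigr)
=\sum_{k_j\in\N}(-1)^{\sum_{i\ge 1}\kappa_{i,j}\zeta_{i,j}}\,2^{-\sum_{i\ge 1}(i+1-\lg u_j)\kappa_{i,j}}.
\]
Taking the product over $j=1,\dots,s$ and using that both $\mu_{\boldsymbol{u}}(\boldsymbol{k})$ and $\wal{\boldsymbol{k}}(\boldsymbol{x})$ factorize across coordinates, I would obtain
\[
\prod_{j=1}^s\prod_{i\ge 1}\bigl(1+(-1)^{\zeta_{i,j}}2^{-(i+1-\lg u_j)}\bigr)
=\sum_{\boldsymbol{k}\in\N^s}2^{-\mu_{\boldsymbol{u}}(\boldsymbol{k})}\,\wal{\boldsymbol{k}}(\boldsymbol{x}).
\]

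Next I would average over $\boldsymbol{x}\in P$ and interchange the two summations; this is licensed by the absolute $\ell^1$-bound $\prod_{j=1}^s\prod_{i\ge 1}(1+2^{-(i+1-\lg u_j)})<\infty$. The standard character identity for digital nets, namely that $\frac1N\sum_{\boldsymbol{x}\in P}\wal{\boldsymbol{k}}(\boldsymbol{x})$ equals $1$ if $\boldsymbol{k}\in P^\perp$ and $0$ otherwise (the same identity underlying \refprp{ErrW}), collapses the sum to $\sum_{\boldsymbol{k}\in P^\perp}2^{-\mu_{\boldsymbol{u}}(\boldsymbol{k})}$. Splitting off the $\boldsymbol{k}=\boldsymbol{0}$ term, which contributes $2^{0}=1$, and recognizing the remainder as $W_{\boldsymbol{u}}(P)$ by its definition, I get the stated formula upon subtracting $1$ from both sides.

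No substantial obstacle arises; the argument is essentially a bookkeeping exercise, and the only point requiring care is the absolute-convergence justification for the product expansion and the interchange with $\frac{1}{N}\sum_{\boldsymbol{x}\in P}$, which is handled by the geometric bound noted above.
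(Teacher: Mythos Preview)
Your argument is correct and is precisely the standard route: expand the infinite product into a Walsh series, apply the character-sum identity $\frac1N\sum_{\boldsymbol{x}\in P}\wal{\boldsymbol{k}}(\boldsymbol{x})=\mathbf{1}_{\boldsymbol{k}\in P^\perp}$, and separate the $\boldsymbol{k}=\boldsymbol{0}$ term. The paper itself omits the proof and simply refers to \cite[Corollary~4.2]{MSM} for the case $u_j=2$; your write-up is exactly that argument carried out with the weighted exponent $i+1-\lg u_j$, so there is no divergence in approach to comment on.
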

We omit the proof, which is almost the same as in the case of the original WAFOM~\cite[Corollary~4.2]{MSM}.
\begin{rmk}
\lblrmk{wf-calc-formula}
$ $
\begin{itemize}
\item
To calculate this value on computer, we consider the following discretized version:
\[
W_{\boldsymbol{u}}^{n}(P) =\frac{1}{N}
\sum_{\boldsymbol{x}\in P}
\prod_{1\leq j \leq s}\prod_{1\leq i\le n}\left(1+(-1)^{\zeta_{i,j}}2^{-(i+1-\lg u_j)}\right)-1,
\]
where we write $\boldsymbol{x} = (\sum_{i\ge 1}2^{i-1}\zeta_{i,j})$.
In fact, this discretized version in the case $u_j=2$ is originally introduced in \cite[Definition~3.5]{MSM} as a definition of WAFOM\@.
They introduced this value as a computable quantity on digital nets for QMC integration error.
\item
This computable formula appears in \cite[Proposition~3]{DGSY} for so-called `polynomial lattice rules', which is a special type of digital nets.
\end{itemize}
\end{rmk}

\section{Practical Comparison}
\lblsec{comparison-qualities}
In previous sections we have shown that the following are approximately equivalent up to constants (see \refthm{e=Err}, \reflem{wf<exp} and \refcor{e=wf}):
\begin{enumerate}
    \item the worst-case integration error of the function space $\fsu$ by $P$,
    \item the quantity $W_{\boldsymbol{u}}$ of $P$, and
    \item the integration error of the exponential function $\mathrm{Err}(\exp(-\sum u_jx_j);P)$ by $P$.
\end{enumerate}
In this section we compare the second and third items from practical perspective.

\subsection{Implementation and Computational Complexity}
One advantage of the approximation $\mathrm{Err}(\exp(-\sum u_jx_j);P)$ is that its implementation is almost trivial.

Since na\"ive implementation of the formula in \refrmk{wf-calc-formula}
 for $W_{\boldsymbol{u}}^n(P)$ is slow, there have been some efforts to accelerate the computation.
Matsumoto, Saito and Matoba~\cite{MSM} restricted their search to sequential generators.
Due to the property of sequential generators, partial products can be reused when calculating the summands (they argue the case $u_j=2$).
The number of floating-point arithmetic operation reduces by a factor of $s$, which is an improvement in the order.
Another effort by Harase~\cite{H} (for the case that $u_j=2$, which is applicable for general $\boldsymbol u$) for accelerating the computation is to use lookup tables.
Though the timings will depend on many factors including code quality, compiler, optimization and CPU, his experiment shows a speedup by a factor of $30$.

Now we argue the computational complexity of ${\serr Pf}$ and $W_{\boldsymbol{u}}(P)$.
We consider the equal weight, i.e., all $u_j$ are equal (this setting includes the case of the original WAFOM).
Moreover, since $n$ can not go infinity in practical computation, we assume that $n$ is fixed.
In this case, the time complexity of his lookup-table method is dominated by $qs$ times multiplications, where $q$ is the number of tables and he used $q=3$ in his paper.
On the other hand, the time complexity of our approximation is dominated by $s$ times additions when $s$ is large.
If $s$ is small, the computational time of exponential functions determines the time complexity.
Thus we can expect that our method is faster than the method using look-up table on large dimensions $s$.

This is observed in the experiment.
In \cite[Table~1]{H}, we see the speed comparison between the calculation by look-up table and the integration error of the exponential function in the case $u_j=2$ for any $j$ and dimensions $s=2l$ with $2\le l \le 8$.
It shows that the computational times of both methods are proportional to $s$.
While our method is slower than his method, the increasing rate of our method per a dimension is smaller.
Thus we expect that our method gets faster when $s$ is large, say $s>30$.
Further, for the search for various values of the weights $\boldsymbol{u}$, lookup-table method requires precomputing for each $\boldsymbol{u}$ while our method does not.
Thus we expect that our approximation is suitable when a dimension $s$ is high or we vary $\boldsymbol{u}$ under the condition of equal weights.

\subsection{Approximation Accuracy}
As shown in \reflem{wf<exp}, the ratio of $\mathrm{Err}(\exp(-\sum_{j=1}^su_jx_j);P)$ to $W_{\boldsymbol{u}}(P)$ lies between $A_{s,\boldsymbol{u}}$ and $B_{s,\boldsymbol{u}}$.
While this is a theoretical bound, this subsection is to observe the distribution of the ratio through experiment.

The method of the experiment is as follows.
\begin{enumerate}
\item Fix the dimensionality $s$, the weights $\boldsymbol u$, the size of the digital net $m$, the number of precision digits $n$ and the number of trials $q$.
\item Generate $q$ digital nets $P$ by uniform random choice of their generating matrices.
\item Compute the integration error $\mathrm{Err}(\exp(-\sum_{j=1}^su_jx_j);P)$, WAFOM value $W_{\boldsymbol{u}}(P)$ and their ratio.
\end{enumerate}
We fix $u_j = 2$ (which corresponds with the original WAFOM), $n = 32$ and $q = 1024$.
\reffig{accuracy} shows the approximation accuracy for randomly generated digital nets with $(s,m) = (2,8)$, $(4,8)$, $(4,16)$ and $(8,16)$.
Here we use the notation $\boldsymbol{u}=\boldsymbol{2}$ meaning that $u_j=2$ for any $j$.

In each figure, the horizontal axis corresponds to $W_{\boldsymbol{2}}^{32}(P)$ and the vertical axis to the ratio $\serr P{\exp(-2\sum_{j=1}^s x_{j})}/W_{\boldsymbol{2}}^{32}(P)$, on both of which we use base-2 log-scale.
The horizontal lines represent the lower and upper bounds $B_{s,\boldsymbol{u}}$ and $A_{s,\boldsymbol{u}}$.
The value for $A_{s,\boldsymbol{u}}$ is computed by straightforward numerical approximation.
The value for $B_{s,\boldsymbol{u}}$ is computed by taking infimum for $w \leq n$.

\begin{figure}
\begin{minipage}{0.5\hsize}
\centering
\includegraphics[width=\hsize]{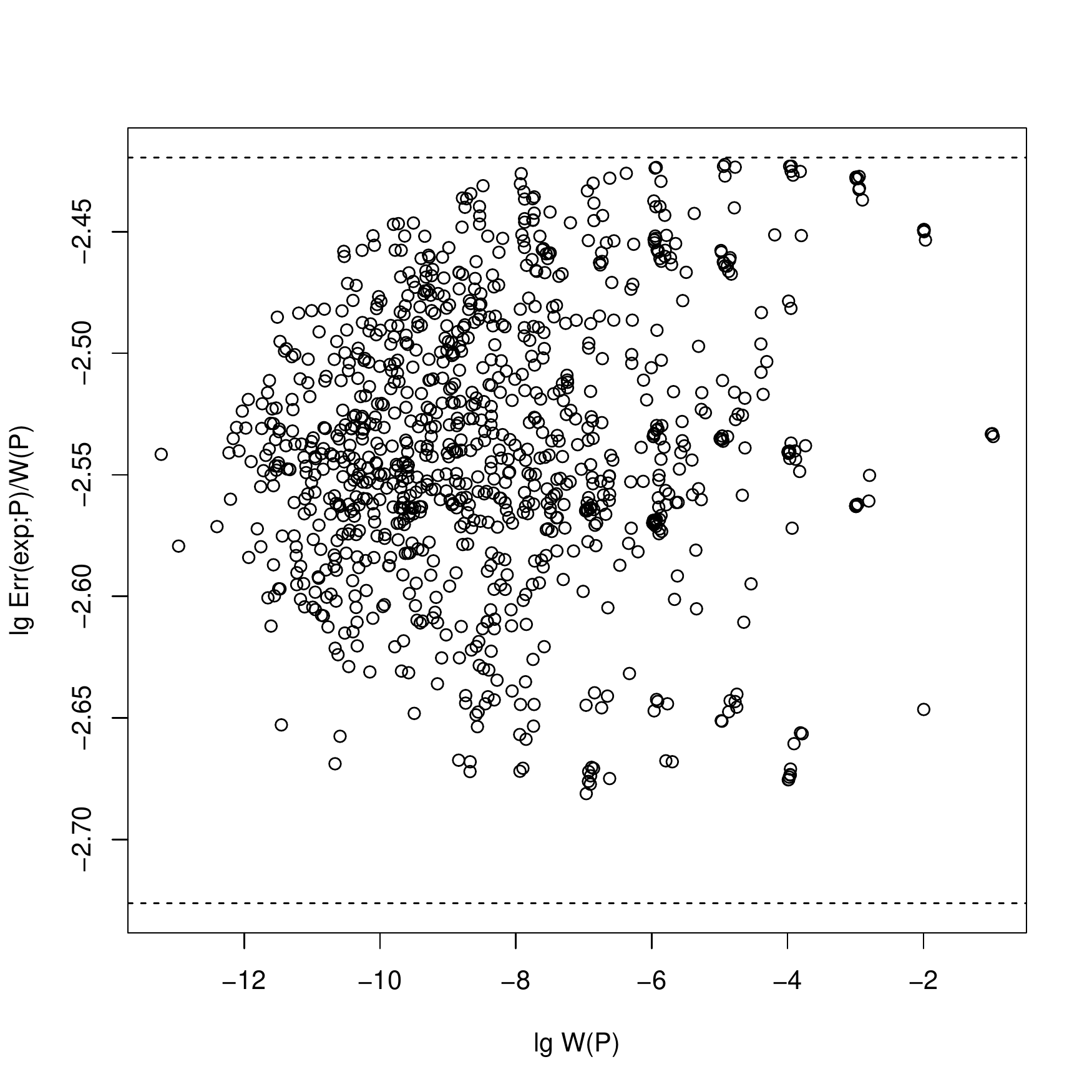}
\hspace{1cm}$(s, m) = (2, 8)$
\end{minipage}
\begin{minipage}{0.5\hsize}
\centering
\includegraphics[width=\hsize]{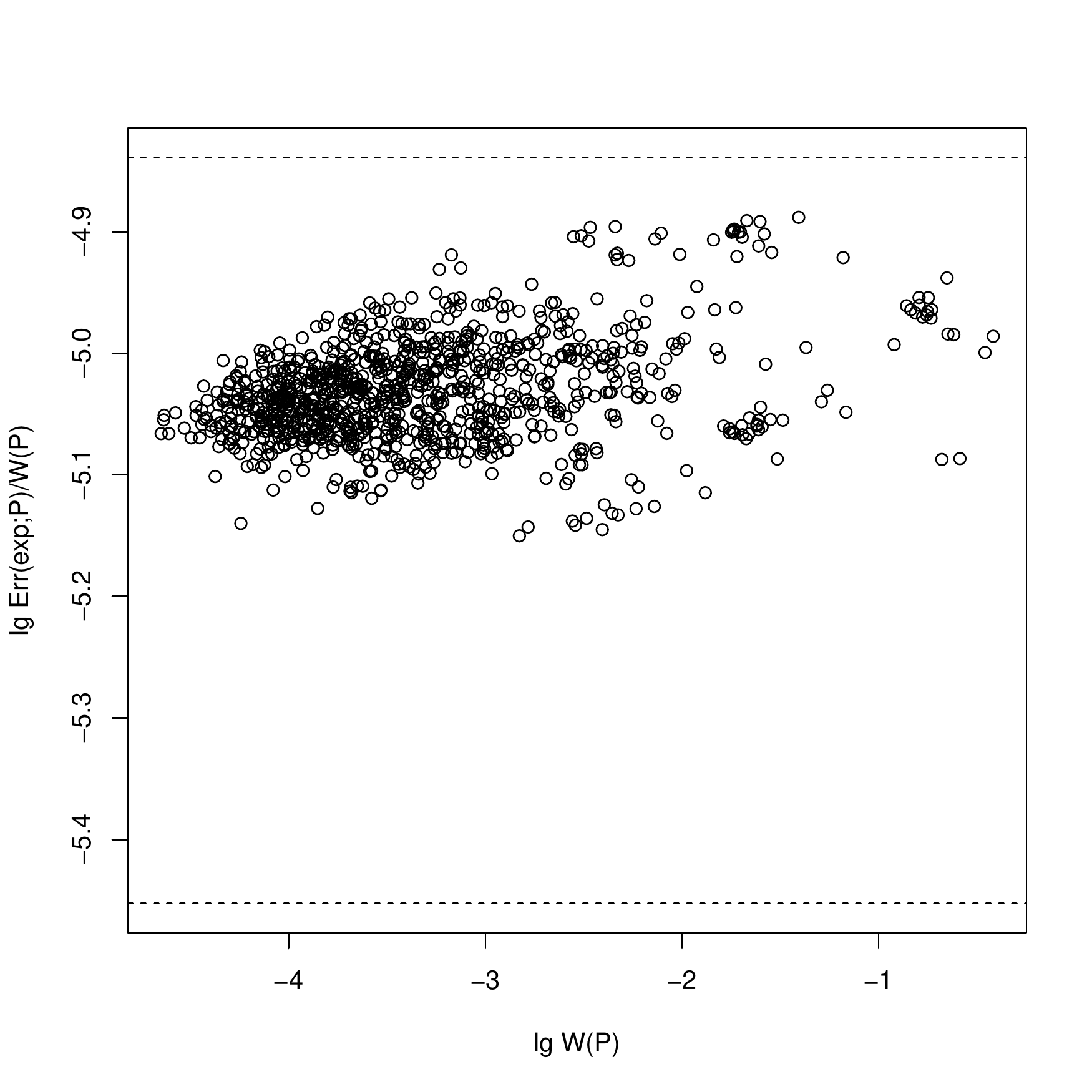}
\hspace{1cm}$(s, m) = (4, 8)$
\end{minipage}
\begin{minipage}{0.5\hsize}
\centering
\includegraphics[width=\hsize]{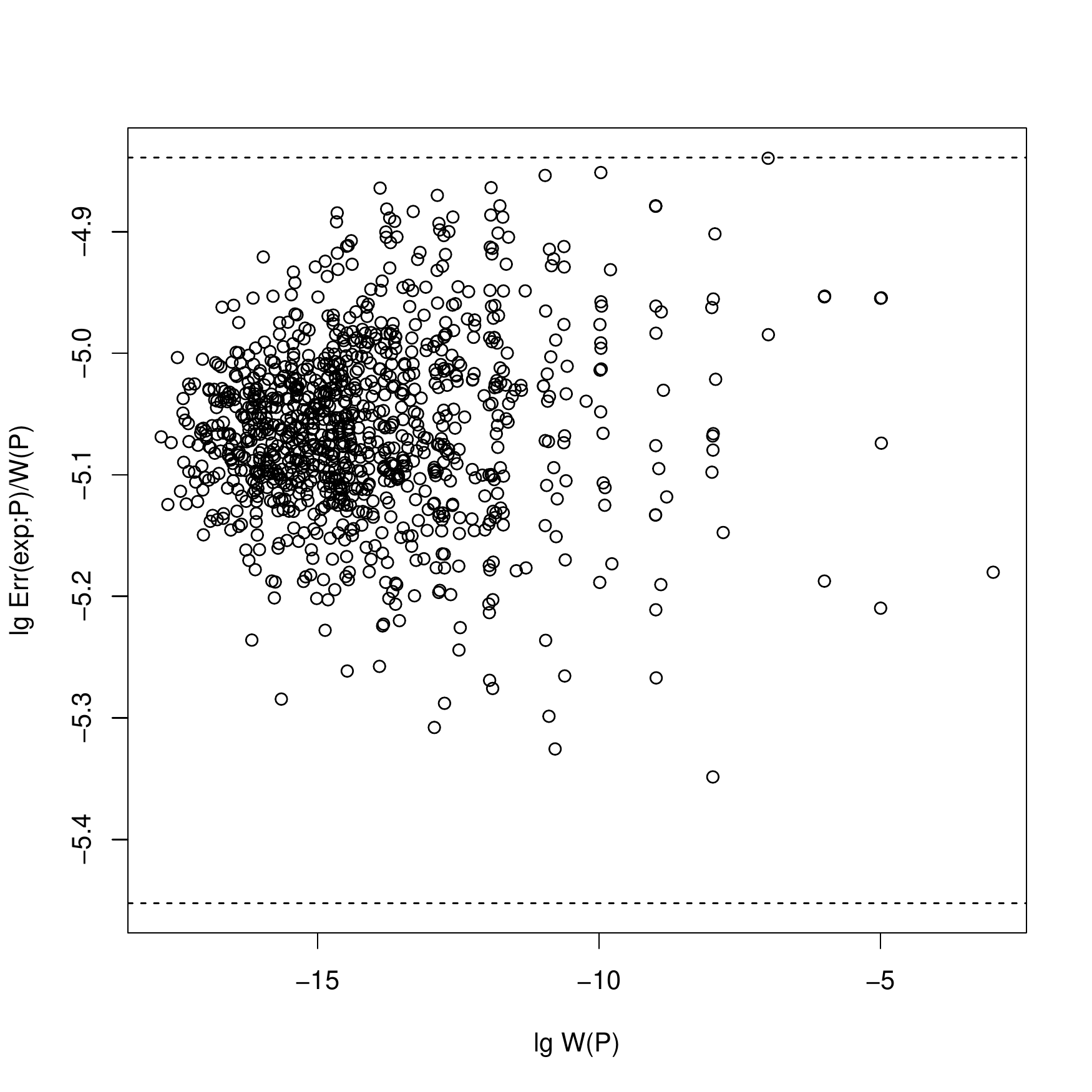}
\hspace{1cm}$(s, m) = (4, 16)$
\end{minipage}
\begin{minipage}{0.5\hsize}
\centering
\includegraphics[width=\hsize]{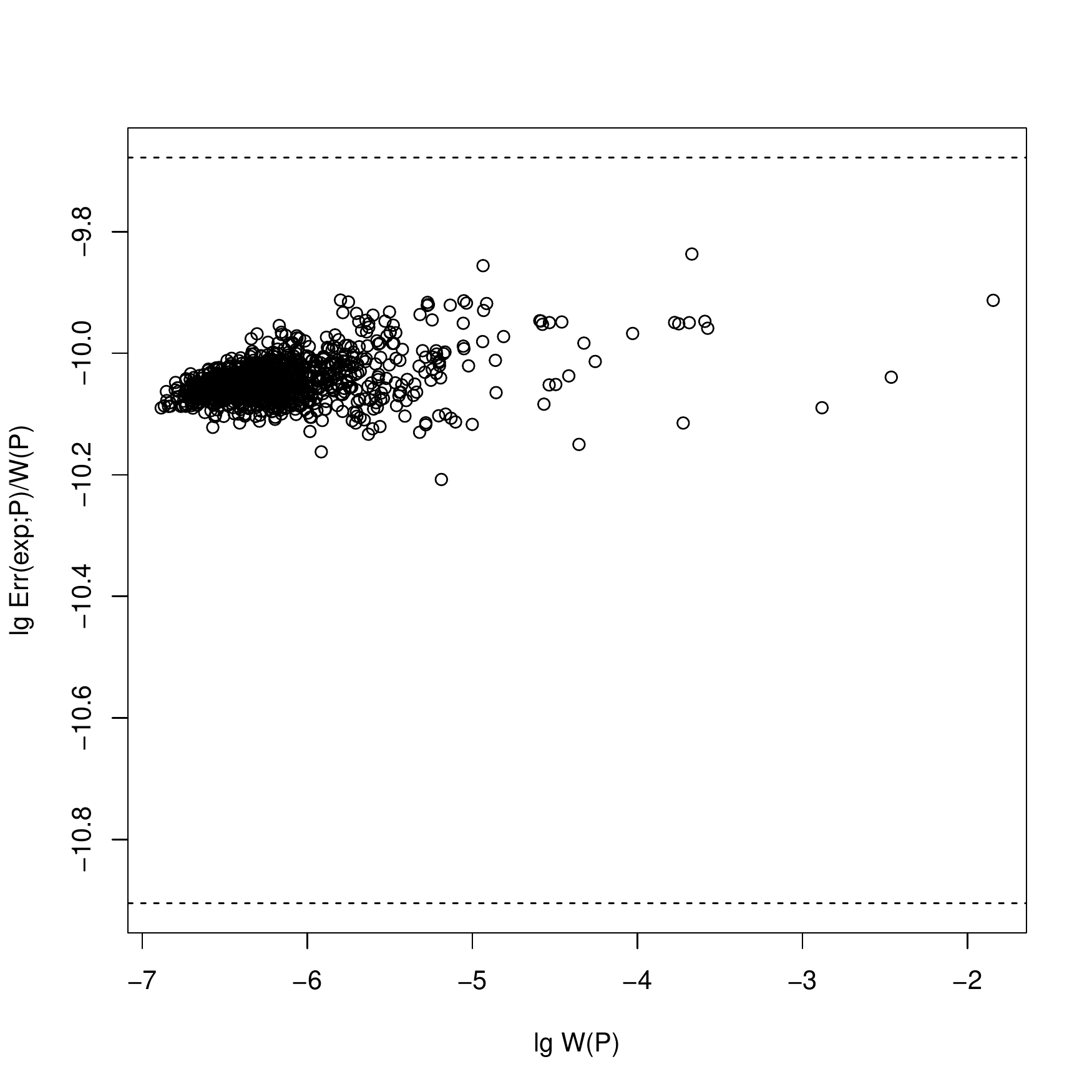}
\hspace{1cm}$(s, m) = (8, 16)$
\end{minipage}
\caption{The values $W_{\boldsymbol{2}}^{32}(P)$ and $\serr P{\exp(-2\sum_{j=1}^s x_{j})}/W_{\boldsymbol{2}}^{32}(P)$ for randomly generated digital nets $P$.}
\lblfig{accuracy}
\end{figure}

The figures show that the ratio is almost constant when $m$ is small and $s$ is large.
The ratio varies as $m$ increases.

\section{Conclusion}
We show that the QMC integration error of $\exp(-\sum_{1 \leq i \leq s} u_jx_j)$ by a digital net $P$ bounds the worst case error ${\werr P\fsu}$.
Here $\fsu$ is some smooth function class in $C^\infty\Ic^d$.
This fact implies that the exponential function is most difficult to approximate the integration value in this function space.
The combination of this fact and the simplicity of the exponential function leads to the efficient computer search of digital nets $P$ whose integration error $\serr Pf$ small for all $f\in \fsu$ uniformly.

\section{Acknowledgements}
The first author was supported by JSPS/MEXT Grant in Aid 23244002, 26310211, 15K13460.
The second and the third author were supported by the Program for Leading Graduate Schools, MEXT, Japan.
The first and third authors were supported by JST CREST\@.
The third author was supported under the Australian Research Councils Discovery Projects funding scheme (project number DP150101770).

\end{document}